\newtheorem{theorem}{Theorem}[section]
\newtheorem{proposition}{Proposition}[section]
\newtheorem{corollary}{Corollary}[section]
\newtheorem{lemma}{Lemma}[section]
\newtheorem{remark}{Remark}[section]
\newtheorem*{k lemma}{Key Lemma}
\newtheorem*{lemma A}{Lemma A}
\newtheorem*{lemma B}{Lemma B}
\newtheorem*{keyword}{Key Words}
\newcommand{\C}{\mathbb C}
\newcommand{\R}{\mathbb R}
\newcommand{\Z}{\mathbb Z}
\author{
Abdelhamid Boussejra \thanks{e-mail: boussejra.abdelhamid@uit.ac.ma},  
Nadia Ourchane\thanks{e-mail:nadia.ourchane20@gmail.com} \\
\begin{small}
Department of Mathematics,  Faculty of Sciences
\end{small}\\
\begin{small}
University Ibn Tofail, Kenitra, Morocco
\end{small}}
\date{}   
\title{\(L^p\)-Poisson integral representations of the generalized Hua operators  on line bundles over $SU(n,n)/S(U(n)\times U(n))$}
\begin{document}

\maketitle

\begin{abstract}
Let \(\tau_\nu\) (\(\nu \in \Z\)) be a character of \(K=S(U(n)\times U(n))\),  and  \(SU(n,n)\times_K\C\)  the associated homogeneous line bundle over \(\mathcal{D}=\{Z\in M(n,\C): I-ZZ^* > 0\}\). Let \(\mathcal{H}_\nu\) be the Hua operator on the sections of \(SU(n,n)\times_K\C\). Identifying sections of \(SU(n,n)\times_K\C\) with functions on  \(\mathcal{D}\) we transfer the  
operator \(\mathcal{H}_\nu\) to an equivalent   matrix-valued operator \(\widetilde{\mathcal{H}}_\nu\) which acts on \(\mathcal{D}\) .\\
Then for a given  \({\C}\)-valued function  \(F\) on \(\mathcal{D}\) satisfying
\(\widetilde{\mathcal{H}}_\nu F=-\frac{1}{4}(\lambda^2+(n-\nu)^2) F.(\begin{smallmatrix}
I&0\\0&-I
\end{smallmatrix})\)
we prove that \(F\) is the Poisson transform by $P_{\lambda,\nu}$ of some $f\in L^p(S)$, when $1<p<\infty$ or $F=P_{\lambda,\nu}\mu$ for some Borel measure $\mu$ on the Shilov boundary $S$, when $p=1$ if and only if 
\[
\sup_{0\leq r < 1}(1-r^2)^{\frac{-n(n-\nu-\Re(i\lambda))}{2}}\left( \int_S |F(rU)|^p {\rm d}U\right) ^{\frac{1}{p}} < \infty,
\]
provided that the complex parameter \(\lambda\) satisfies  \(i\lambda \notin 2\Z^- +n-2\pm \nu\) and \(\Re(i\lambda)>n-1\).\\ 
This generalizes  the result in \cite{B1} which corresponds to $\tau_\nu$ the trivial representation. 
\end{abstract}
\begin{keyword}
Generalized Hua operator, Poisson transform on line bundles, \(H^p\)-Eigensections.
\end{keyword}
\section{Inroduction and main results}
Let \(\mathcal{D}=G/K\) be a bounded symmetric domain of tube type and \(P_\Xi\) a maximal standard parabolic subgroup of \(G\) with Langlands decomposition \\\(P_\Xi=M_\Xi A_\Xi N_\Xi\).   Let \(\tau_{\nu}\) (\(\nu\in\Z\)) be  a character of \(K\), and $E_\nu=G\times_K {\C}$ the associated homogeneous  line bundle. Let \(\sigma_{\lambda,\nu}\) be a character of \(P_\Xi\) parametrized by \(\lambda\in \C, \nu\in \Z\) such that \(\sigma_{{\lambda,\nu}_{\mid_{K\cap M_\Xi}}}=\tau_{\nu_{\mid_{K\cap M_\Xi}}}\), and \( L_{\lambda,\nu}=G\times_{P_\Xi}\C\) the associated homogeneous line bundle over the Shilov boundary \(S=G/P_\Xi\). The space of hyperfunction-valued sections of this line bundle can be identified with 
\[
B(G/P_\Xi, \sigma_{\lambda,\nu})=\{f\in B(G); f(gp)=\sigma_{\lambda,\nu}(p)^{-1}f(g), \forall g\in G, \, p\in P_\Xi\}
\]
 Then we can define a Poisson transform \(P_{\lambda,\nu}\)from \(B(G/P_\Xi, \sigma_{\lambda,\nu})\) to the space \(C^{\infty}(G/K,\tau_\nu)\) by 
\begin{equation}\label{Poisson1}
P_{\lambda,\nu}f(g)=\int_K \tau_{\nu}(k)f(gk){\rm d}k.
\end{equation}
Here \(C^{\infty}(G/K,\tau_\nu)\) denotes the space of smooth functions on \(G\) that  are right \(K\)-covariant of type \(\tau\).\\
Consider the element of \(\mathcal{U}(\mathfrak{g}_c)\otimes \mathfrak{k}_c \) defined by \(\mathcal{H}_\nu\)=\(\sum_{j,k}E_jE_k^\ast\otimes [E_k,E_j^\ast]\). Then \(\mathcal{H}_\nu\) defines a homogeneous differential operator from \(C^{\infty}(G/K,\tau_\nu)\) to \(C^{\infty}(G/K,\tau_\nu\otimes Ad_K\mid\mathfrak{k}_c)\). The operator \(\mathcal{H}_\nu\) will be called here the generalized Hua operator.\\
In \cite{KZ} Koufany and Zhang proved that the Poisson transform  \(P_{\lambda,\nu}\) is an isomorphism from \(B(G/P_\Xi, \sigma_{\lambda,\nu})\) onto an eigenspace \( \mathcal{E}_{\lambda,\nu}(G/K,\tau_\nu)\) of the generalized Hua operator, under certain condition on \(\lambda\) (see also [\cite{OS}, \cite{B}]).\\
This result suggests the problem of  characterizing the Poisson transforms \(P_{\lambda,\nu}(B_0(S))\) where \(B_0(S)\) is some natural subspace of \(B(G/P_\Xi, \sigma_{\lambda,\nu})\) such as the space \(L^p(G/P_\Xi,\sigma_{\lambda,\nu})\) ....\\
In this paper we will consider this problem for \(\mathcal{D}\) the bounded symmetric domain consisting of \(n\times n\) complex matrices \(Z\) such that \(I-ZZ^* > 0\) (positive definite). \\
To describe our results let us fix some notations referring to the next section for more details. As Hermitian space \(\mathcal{D}=SU(n,n)/S(U(n)\times U(n))\) and the Shilov boundary \(S\) is given by \(S=\{Z: ZZ^\ast =I\}= U(n)\).\\
The right \(K\)-covariant functions may be identified with functions defined on \(\mathcal{D}\) (see Section 3). Under this identification the operator \(\mathcal{H}_\nu\) may be viewed as a differential operator acting from \(C^\infty(\mathcal{D})\) to \(C^\infty(\mathcal{D},\mathfrak{k}_c)\). We realize explicitly this new operator which we denote by \(\widetilde{\mathcal{H}}_\nu \). More precisely, if \(\partial\) is the \(n\times n\)-matrix differential operators \(\left( \frac{\partial}{\partial z_{ij}}\right)_{i,j}\), then we show (Proposition 3.1) that 
\begin{equation*}
\widetilde{\mathcal{H}}_\nu=
(\begin{smallmatrix}
(I-ZZ^{\ast})\overline{\partial}(I-Z^{\ast}Z)\partial^{\prime}-\nu(I-ZZ^{\ast})\overline{\partial}Z^{\ast}&0\\
0&-\partial^{\prime}(I-ZZ^{\ast})\overline{\partial}(I-Z^{\ast}Z)+\nu Z^{\ast}\overline{\partial}(I-ZZ^{\ast})
\end{smallmatrix})
\end{equation*} 
with the convention that \(\overline{\partial}\) and \(\partial^{\prime}\) do not differentiate $ (I-Z^{\ast}Z)$ and $(I-ZZ^{\ast})$ respectively.\\  In the above \(\overline{\partial}\) and \(\partial^{\prime}\) denote  respectively, the conjugate and the transpose of \(\partial\).\\
Now, following \cite{OT}, \cite{K} the Poisson transform $P_{\lambda,\nu}$ may be  writing  explicitly as  integral operator from \(B(S)\) the space  of hyperfunctions on $S$ to $\mathcal{C}^\infty(\mathcal{D})$. Namely (see section 3)
\begin{equation*} 
(P_{\lambda,\nu}f)(Z)=\int_{S} P_{\lambda,\nu}(Z,U)f(U)\,{\rm d}U,
\end{equation*}
where 
\[ P_{\lambda,\nu}(Z,U)=\left(\frac{\det(I-ZZ^{\ast})}{\mid \det(I-ZU^{\ast})\mid^2}\right)^{\frac{i\lambda+n-\nu}{2}}(\det(I-ZU^\ast))^{-\nu}.\]
A simple computation shows (Corollary \ref{eigenvalue} ) that the Poisson integrals \(P_{\lambda,\nu}f\) satisfy the following generalized Hua system of second order differential equations
\begin{equation}\label{H}
\widetilde{\mathcal{H}}_\nu F=-\frac{1}{4}(\lambda^2+(n-\nu)^2) F.(\begin{smallmatrix}
I&0\\0&-I
\end{smallmatrix})
\end{equation}
For \(\nu\in {\Z}\) and \(\lambda\in {\C}\) let \(\mathcal{E}_{\lambda,\nu}({\cal D})\) denote the space of all $\C$-valued functions on $\mathcal{D}$ that satisfy (\ref{H}).\\ Since  \(tr((I-ZZ^{\ast})\overline{\partial}(I-Z^{\ast}Z)\partial^{\prime}-\nu(I-ZZ^{\ast})\overline{\partial}Z^{\ast})\) is -up to a constant- the Casimir operator of \(SU(n,n)\) with respect to a weighted action with weight \(\nu\in {\Z}\), it follows that the elements of \(\mathcal{E}_{\lambda,\nu}({\cal D})\) are real analytic functions on \(\mathcal{D}\), see \cite{IO}.\\
Moreover, for \(\nu\in {\Z}\) and \(\lambda\in {\C}\) such that \(i\lambda \notin 2\Z^- +n-2\pm \nu\), the solutions \(F\) of  (\ref{H}) are given by  generalized Poisson transforms of hyperfunctions \(f\) on the Shilov boundary, see \cite{KZ} and \cite{B}.\\
In this paper we will show that the solutions of the generalized Hua system (\ref{H})  for which the hyperfunction \(f\) is given by a Borel  measure or  an \(L^p\)-function (\(1<p<\infty\)) are characterized by an \(H^p\)-condition.\\
As usual we denote by \(L^p(S)\) the Lebesgue space consisting of all measurable (classes) \(\C\)-valued functions \(f\) on \(S\) with
\[
\Vert f\Vert_p =\left(\int_{S} \mid f(U)\mid^p\, {\rm d}U\right) ^{\frac{1}{p}} < \infty.
\] 
Let $\mathcal{M}(S)$ denote the space of all complex Borel measures $\mu$ on $S$ with norm $\|\mu\|=|\mu|(S)$. Here $|\mu|$ denotes the total variation of the measure $\mu$.\\ 
For \(1\leq p<\infty\), define the Hardy-type spaces  \(\mathcal{E}_{\lambda}^{p,\nu}({\cal D})\) by
\begin{equation*}
 \mathcal{E}_{\lambda}^{p,\nu}({\cal D})=\{F\in \mathcal{E}_{\lambda,\nu}({\cal D}): \|F\|_{*,p}<\infty\},
\end{equation*} 
where 
\begin{equation*}
 \|F\|_{*,p}=\sup_{0\leq r<1} (1-r^2)^{\frac{-n(n-\nu-\Re(i\lambda))}{2}}\left( \int_S |F(rU)|^p {\rm d}U\right) ^{\frac{1}{p}}.
\end{equation*}
Then the  main result of this paper may be stated as follows.
\begin{theorem}\label{1<p<oo}
Let \(\lambda \in \C\) satisfies the following conditions
 \[i\lambda \notin 2\Z^- +n-2\pm \nu \, \textit{and}\quad \Re(i\lambda)>n-1.\] 
If \(F\in \mathcal{E}_{\lambda,\nu}({\cal D})\) then 
\begin{itemize}
\item[(i)] For \(1<p<\infty\), \(F(Z)=\int_{S} P_{\lambda,\nu}(Z,U)f(U)\,{\rm d}U\)  for some \(f\) in \(L^p(S)\)  if and only if \(F\in \mathcal{E}_{\lambda}^{p,\nu}({\cal D})\).\\
Moreover, there exists a positive constant  \(\gamma(\lambda)\) such that for every \(f\in L^p(S)\) the following estimates hold:
\begin{align*}
|c_\nu(\lambda)| \|f\|_p \leq \|P_{\lambda,\nu} f\|_{*,p} \leq \gamma(\lambda)\|f\|_p.
\end{align*}
\item[(ii)] For \(p=1\), \(F(Z)=\int_{S} P_{\lambda,\nu}(Z,U)\,d\mu(U)\) for some complex Borel measure \(\mu\) on \(S\) if and only if \(F\in \mathcal{E}_{\lambda}^{1,\nu}({\cal D})\).\\
Moreover, there exists a positive constant  $ \gamma(\lambda) $ such that for every $\mu \in \mathcal{M}(S)$ the following estimates hold:
\begin{align*}
|c_\nu(\lambda)| \|\mu\| \leq \|P_{\lambda,\nu} \mu\|_{*,1} \leq \gamma(\lambda)\|\mu\|.
\end{align*}
\end{itemize}
\end{theorem}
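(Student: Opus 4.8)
The plan is to split the equivalence into the continuity (``only if'') direction and the converse (``if'') direction, both resting on a sharp analysis of the Poisson kernel on the compact group $S=U(n)$. First I record that, since $\nu\in\Z$, the modulus of the kernel is
\[
|P_{\lambda,\nu}(Z,U)|=\det(I-ZZ^{\ast})^{\frac{\Re(i\lambda)+n-\nu}{2}}\,|\det(I-ZU^{\ast})|^{-(\Re(i\lambda)+n)},
\]
so on the slice $Z=rU_0$ the first factor is $(1-r^2)^{n(\Re(i\lambda)+n-\nu)/2}$. The analytic heart of the continuity estimate is the asymptotic
\[
I(r):=\int_S|\det(I-rU^{\ast})|^{-(\Re(i\lambda)+n)}\,{\rm d}U\;\leq\;C\,(1-r^2)^{-n\Re(i\lambda)}\qquad(r\to1^-),
\]
which converges precisely because $\Re(i\lambda)>n-1$; I would prove it by diagonalizing $U\in U(n)$ and reducing to a product of one-dimensional beta-type integrals. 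Since the kernel depends on $(U_0,U)$ only through $U_0U^{\ast}$, the map $f\mapsto P_{\lambda,\nu}f(r\,\cdot)$ is, up to the scalar $(1-r^2)^{n(\Re(i\lambda)+n-\nu)/2}$, a convolution on the unimodular group $U(n)$; Young's inequality then gives $\|P_{\lambda,\nu}f(r\,\cdot)\|_p\leq\|f\|_p\,\big\|\,|P_{\lambda,\nu}(rI,\cdot)|\,\big\|_1$, and the bound on $I(r)$ makes the normalizing factor $(1-r^2)^{-n(n-\nu-\Re(i\lambda))/2}$ cancel exactly, yielding $\|P_{\lambda,\nu}f\|_{*,p}\leq\gamma(\lambda)\|f\|_p$ for $1\leq p<\infty$ (and the same for ${\rm d}\mu$ when $p=1$).

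For the lower bounds $|c_\nu(\lambda)|\,\|f\|_p\leq\|P_{\lambda,\nu}f\|_{*,p}$ I would upgrade $I(r)$ to a genuine asymptotic with concentration of mass: the normalized kernel $(1-r^2)^{-n(n-\nu-\Re(i\lambda))/2}P_{\lambda,\nu}(rU_0,\cdot)$ should behave, as $r\to1$, like $c_\nu(\lambda)$ times an approximate identity concentrating at $U_0$, where $c_\nu(\lambda)$ is the leading (Gamma-factor) coefficient of the expansion; its non-vanishing is exactly what the hypothesis $i\lambda\notin 2\Z^-+n-2\pm\nu$ guarantees. I would obtain this by localizing the integral near the identity of $U(n)$ and applying a Laplace-type asymptotic. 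It then follows that $(1-r^2)^{-n(n-\nu-\Re(i\lambda))/2}P_{\lambda,\nu}f(r\,\cdot)\to c_\nu(\lambda)f$ (respectively $c_\nu(\lambda)\mu$) in the weak ($L^p$), respectively weak-$*$ ($\mathcal{M}(S)$), sense, and lower semicontinuity of the norm delivers the lower estimate.

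The converse direction is the crux. Given $F\in\mathcal{E}_\lambda^{p,\nu}(\mathcal{D})$, the theorem of Koufany--Zhang (applicable since $i\lambda\notin 2\Z^-+n-2\pm\nu$) already provides a unique hyperfunction $f\in B(S)$ with $F=P_{\lambda,\nu}f$. Setting $g_r(U)=(1-r^2)^{-n(n-\nu-\Re(i\lambda))/2}F(rU)$, the finiteness of $\|F\|_{*,p}$ says precisely that $\{g_r\}_{0\leq r<1}$ is bounded in $L^p(S)$ (resp. $\{g_r\,{\rm d}U\}$ is bounded in $\mathcal{M}(S)$ when $p=1$). Using reflexivity of $L^p(S)$ for $1<p<\infty$, or weak-$*$ sequential compactness of bounded sets in $\mathcal{M}(S)=C(S)^{\ast}$ for $p=1$, I extract $r_j\to1$ with $g_{r_j}\rightharpoonup g$ (resp. $g_{r_j}\,{\rm d}U\rightharpoonup\mu$). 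On the other hand, by the approximate-identity statement above, $g_r\to c_\nu(\lambda)f$ in the hyperfunction topology; since the weak / weak-$*$ limit must agree with this limit in the coarser topology, I conclude $c_\nu(\lambda)f=g\in L^p(S)$ (resp. $=\mu\in\mathcal{M}(S)$). As $c_\nu(\lambda)\neq0$, $f=g/c_\nu(\lambda)$ lies in $L^p(S)$ and $F=P_{\lambda,\nu}f$, while weak lower semicontinuity gives $\|f\|_p\leq\|F\|_{*,p}/|c_\nu(\lambda)|$.

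The main obstacle I anticipate is the fine asymptotic analysis of the Poisson kernel, in two layers: the sharp $L^1$-bound $I(r)\asymp(1-r^2)^{-n\Re(i\lambda)}$, and, more delicately, the concentration statement with the explicit constant $c_\nu(\lambda)$. Because the kernel is governed by $\det(I-rU^{\ast})$ rather than a single scalar radial profile, these reduce to iterated integrals over the eigenvalues of $U$, and one must track the resulting Gamma factors carefully both to identify $c_\nu(\lambda)$ and to confirm $c_\nu(\lambda)\neq0$ under $i\lambda\notin 2\Z^-+n-2\pm\nu$. The second subtle point is the matching of the two notions of boundary limit of $g_r$ (weak/weak-$*$ versus hyperfunction), which relies on injectivity of $P_{\lambda,\nu}$ together with the boundary-value asymptotics established for the lower bound.
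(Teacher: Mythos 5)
Your upper bound is essentially the paper's own argument: the estimate $I(r)\leq C(1-r^2)^{-n\Re(i\lambda)}$ is exactly the generalized Forelli--Rudin inequality (Proposition \ref{prop CN}), and Young's inequality on $U(n)$ is an equivalent substitute for the paper's Fubini/H\"older argument, so that part is sound. The genuine gap is in the converse direction (and it also undermines your sharp lower bound): everything hinges on the claim that $g_r=(1-r^2)^{-n(n-\nu-\Re(i\lambda))/2}F(r\,\cdot)$ converges to $c_\nu(\lambda)f$ \emph{in the hyperfunction topology}, deduced from a kernel-concentration (approximate identity) statement. Two problems. First, as you normalized it the claim is false whenever $\Re(\lambda)\neq 0$: by the paper's Key Lemma the $\textbf{m}$-th component of $g_r$ behaves like $c_\nu(\lambda)(1-r^2)^{-in\Im(i\lambda)/2}f_{\textbf{m}}$, and the unimodular factor $(1-r^2)^{-in\Im(i\lambda)/2}$ oscillates as $r\to1^-$; convergence statements require the complex normalization $(1-r^2)^{-n(n-\nu-i\lambda)/2}$ (the norms are unaffected). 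Second, and more seriously, an approximate-identity argument cannot be paired with a hyperfunction. Hyperfunctions are continuous only against real-analytic test functions, so you would need the adjoint family $\psi\mapsto(1-r^2)^{-n(n-\nu-i\lambda)/2}\int_S P_{\lambda,\nu}(rU,\cdot)\psi(U)\,{\rm d}U$ to converge in the topology of real-analytic functions on $S$, i.e.\ uniformly as holomorphic functions on a \emph{fixed} complex neighborhood of $S$. This cannot be extracted from concentration estimates: the holomorphic extension of the kernel in the boundary variable exists only on a neighborhood of $U(n)$ that shrinks as $r\to1^-$ (one needs all singular values of the complexified variable to lie in $(r,1/r)$). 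Uniform or $L^q$ convergence of the adjoint, which is all that concentration gives, is useless against a hyperfunction, and your appeal to ``injectivity of $P_{\lambda,\nu}$'' does not repair this, since identifying $P_{\lambda,\nu}g$ with $c_\nu(\lambda)F$ runs into exactly the same issue.

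What actually closes this gap --- and is the technical heart of the paper, absent from your proposal --- is the $K$-type machinery: the expansion $F(rU)=\sum_{\textbf{m}}\Phi_{\lambda,\textbf{m}}^\nu(r)f_{\textbf{m}}(U)$ (Proposition \ref{prop gene spherical} and Corollary \ref{serie expansion}) together with the asymptotics $\Phi_{\lambda,\textbf{m}}^\nu(r)\sim c_\nu(\lambda)(1-r^2)^{n(n-\nu-i\lambda)/2}$ with the \emph{same nonvanishing constant for every} $\textbf{m}$ (the Key Lemma, proved via the hypergeometric determinant identities of Lemmas A and B, with uniformity in $\textbf{m}$). Testing your weak limit $g$ against $K$-finite functions and using these asymptotics for each fixed $\textbf{m}$ is the legitimate version of your ``matching of limits'': it gives $g_{\textbf{m}}=c_\nu(\lambda)f_{\textbf{m}}$ for all $\textbf{m}$, hence $f\in L^p(S)$. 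Your Laplace-method computation, even if carried out, only produces the zonal case $\textbf{m}=\textbf{0}$ (the total mass of the kernel), which identifies nothing beyond the component of $f$ along the constants. Note also that the paper's route for general $p$ is structurally different: it first smooths $F$ by an approximate identity on $K$ to reduce to the case $p=2$ (Theorem \ref{p=2}, itself proved by the $K$-type expansion plus the uniform Key Lemma), then applies Banach--Alaoglu to the resulting $L^2$ boundary functions; your direct weak-compactness on the dilates $g_r$ is a reasonable alternative skeleton, but without the spherical-function asymptotics for all $K$-types it cannot be completed.
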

In above, $c_\nu(\lambda)$ is the $c$-function given by 
\[
c_\nu(\lambda)=\frac{\Gamma_\Omega(n)\Gamma_\Omega(i\lambda)}{\Gamma_\Omega(\frac{i\lambda+n+\nu}{2})\Gamma_\Omega(\frac{i\lambda+n-\nu}{2})},
\]
where  \(\Gamma_\Omega(s)=\prod_{j=1}^n \Gamma(s-(j-1))\) is the Gindikin Gamma function (see \cite{FK}).\\ 
\begin{remark}
(i) For \(\nu=0\) and \(p=2\) the above theorem was proved by the first author \cite{B1}.\\
(ii) For \(\nu=0\) the first author and Koufany \cite{B2,B3} generalized Theorem \ref{1<p<oo} to all Hermitian symmetric spaces.
\end{remark}
As an immediate consequence of our main result, we get a Riesz-Herglotz type theorem for the generalized Hua-Harmonic functions on \(\cal D\)(i.e., \(\widetilde{\mathcal{H}}_\nu F=0\)). 
\begin{corollary}
Let \(\nu\in \Z^-\).\\
(i) (Riesz-Herglotz representation type formula) The Poisson transform \(P_{i(\nu-n),\nu}\) is a topological isomorphism from \(\mathcal{M}(S)\) onto \( \mathcal{E}_{i(\nu-n)}^{1,\nu}({\cal D})\).\\
(ii) For \(1<p<\infty\), the Poisson transform \(P_{i(\nu-n),\nu}\) is a topological isomorphism from \(L^p(S)\) onto \(\mathcal{E}_{i(\nu-n)}^{p,\nu}({\cal D})\).
\end{corollary}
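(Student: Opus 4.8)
The plan is to deduce both parts directly from Theorem \ref{1<p<oo} by specializing the spectral parameter to $\lambda_0 = i(\nu-n)$ and verifying that, for $\nu\in\Z^-$, this value satisfies the two standing hypotheses. First I would record that $i\lambda_0 = n-\nu$, so $\Re(i\lambda_0)=n-\nu$; since $\nu\in\Z^-$ we have $n-\nu\ge n>n-1$, which gives $\Re(i\lambda)>n-1$. For the non-resonance condition I would substitute $i\lambda_0=n-\nu$ into $2\Z^-+n-2\pm\nu$: the $+\nu$ branch would require an index $k=1-\nu\in\Z^-$, and the $-\nu$ branch would require $1\in\Z^-$; since $\nu\le 0$ makes $1-\nu\ge 1$, neither can occur, so $i\lambda_0\notin 2\Z^-+n-2\pm\nu$ automatically. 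Thus Theorem \ref{1<p<oo} applies verbatim with $\lambda=\lambda_0$. I would also note why this is the correct branch: the eigenvalue in (\ref{H}) vanishes for $\lambda^2=-(n-\nu)^2$, i.e. $\lambda=\pm i(n-\nu)$, but the conjugate root $i(n-\nu)$ yields $\Re(i\lambda)=\nu-n<0$ and fails the hypothesis, so only $\lambda_0=i(\nu-n)$ is admissible.

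The decisive simplification is that at $\lambda_0$ the weight exponent collapses: with $\Re(i\lambda_0)=n-\nu$ one has $\tfrac{-n(n-\nu-\Re(i\lambda_0))}{2}=0$, so the factor $(1-r^2)^{\cdots}$ in $\|\cdot\|_{*,p}$ is identically $1$ and the Hardy norm reduces to the plain $\sup_{0\le r<1}\|F(r\,\cdot)\|_{L^p(S)}$. Simultaneously the right-hand eigenvalue in (\ref{H}) vanishes, since $\lambda_0^2+(n-\nu)^2=-(n-\nu)^2+(n-\nu)^2=0$; hence $\mathcal{E}_{i(\nu-n),\nu}(\mathcal{D})$ is exactly the kernel $\{F:\widetilde{\mathcal{H}}_\nu F=0\}$ of the Hua operator, i.e. the space of generalized Hua-harmonic functions, and $\mathcal{E}^{p,\nu}_{i(\nu-n)}(\mathcal{D})$ is the subspace of those with uniformly $L^p$-bounded spherical means. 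This is precisely what recasts Theorem \ref{1<p<oo} as a Riesz--Herglotz statement.

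Next I would evaluate the $c$-function at $\lambda_0$. Substituting $i\lambda_0=n-\nu$ gives $\tfrac{i\lambda_0+n+\nu}{2}=n$ and $\tfrac{i\lambda_0+n-\nu}{2}=n-\nu$, so
\[
c_\nu(\lambda_0)=\frac{\Gamma_\Omega(n)\,\Gamma_\Omega(n-\nu)}{\Gamma_\Omega(n)\,\Gamma_\Omega(n-\nu)}=1,
\]
every factor being finite and nonzero because $\nu\le 0$ keeps all arguments of $\Gamma_\Omega$ on the positive real axis. With $|c_\nu(\lambda_0)|=1$, the two-sided estimates of Theorem \ref{1<p<oo} become $\|\mu\|\le\|P_{\lambda_0,\nu}\mu\|_{*,1}\le\gamma(\lambda_0)\|\mu\|$ in part (i) and $\|f\|_p\le\|P_{\lambda_0,\nu}f\|_{*,p}\le\gamma(\lambda_0)\|f\|_p$ in part (ii).

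Finally I would assemble these into the topological-isomorphism assertions. The lower bounds show $P_{\lambda_0,\nu}$ is injective with a bounded inverse on its image, the upper bounds give continuity, and the ``if and only if'' half of Theorem \ref{1<p<oo} identifies the image with all of $\mathcal{E}^{1,\nu}_{i(\nu-n)}(\mathcal{D})$ (resp. $\mathcal{E}^{p,\nu}_{i(\nu-n)}(\mathcal{D})$), yielding surjectivity; hence $P_{i(\nu-n),\nu}$ is a bicontinuous bijection in each case. There is no genuine obstacle beyond the bookkeeping above; the only points demanding care are confirming that $\lambda_0$ avoids the resonance set and is neither a pole nor a zero of $c_\nu$, both settled by the explicit computations, and that the vanishing of the weight exponent is not a coincidence but the source of the clean $\sup$-norm. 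I would close by remarking that the normalization $c_\nu(\lambda_0)=1$ is exactly what makes the lower bound read $\|\mu\|$ (resp. $\|f\|_p$), giving the representation its faithful Herglotz character rather than a mere isomorphism up to constants.
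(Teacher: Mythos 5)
Your proposal is correct and follows exactly the paper's route: the authors state the corollary as an immediate specialization of Theorem \ref{1<p<oo} at $\lambda_0=i(\nu-n)$, and your verifications --- $i\lambda_0=n-\nu$ giving $\Re(i\lambda_0)=n-\nu>n-1$, the non-resonance check, the vanishing of both the weight exponent and the eigenvalue (so $\mathcal{E}_{i(\nu-n),\nu}(\mathcal{D})$ is the Hua-harmonic kernel), and $c_\nu(\lambda_0)=1$ --- are precisely the bookkeeping the paper leaves implicit. No gaps; your explicit computation of $c_\nu(\lambda_0)=1$ and the remark on the inadmissible conjugate root $i(n-\nu)$ are welcome details the paper omits.
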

We prove  the necessary conditions (Theorem \ref{1<p<oo}) by making use of  the generalized Forelli-Rudin inequality (Proposition \ref{prop CN}). For the sufficiency conditions, the crucial point is the reduction to the case $p=2$, namely 
\begin{theorem}\label{p=2}
Let  \(\nu \in \Z\) and $\lambda \in \C$ such that $i\lambda \notin 2\Z^- +n-2\pm \nu$ and $\Re(i\lambda)>n-1$ and let $F\in \mathcal{E}_{\lambda,\nu}({\cal D})$. Then 
\begin{itemize}
\item[(i)] $F$ is the generalized Poisson transform by $P_{\lambda,\nu}$ of some $f$ in $L^2(S)$  if and only if $F\in \mathcal{E}_{\lambda}^{2,\nu}({\cal D})$.\\
Moreover, there exists a positive constant  $ \gamma(\lambda) $ such that for every $f\in L^2(S)$ the following estimates hold:
\begin{align*}
|c_\nu(\lambda)| \|f\|_2 \leq \|P_{\lambda,\nu} f\|_{*,2} \leq \gamma(\lambda)\|f\|_2.
\end{align*}
\item[(ii)] If $F\in \mathcal{E}_{\lambda}^{2,\nu}({\cal D})$. Then its $L^2$-boundary value $f$ is given by the following inversion formula:
\begin{align*}
f(U)=| c_\nu(\lambda)|^{-2}\lim_{r\rightarrow 1^-}(1-r^2)^{-n(n-\nu-\Re(i\lambda))}\int_S \overline{P_{\lambda,\nu}(rV,U)}F(rV){\rm d}V,
\end{align*} 
in $L^2(S)$.
\end{itemize}
\end{theorem}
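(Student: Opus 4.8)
The plan is to derive the inversion formula (ii) and both norm inequalities of (i) from a single boundary-concentration property of $P_{\lambda,\nu}$, and to obtain the growth (\emph{only if}) half of (i) directly from the generalized Forelli--Rudin inequality. Indeed, the upper estimate $\|P_{\lambda,\nu}f\|_{*,2}\le\gamma(\lambda)\|f\|_2$ is the case $p=2$ of Proposition \ref{prop CN}; it already yields the easy implication in (i), namely that $P_{\lambda,\nu}f\in\mathcal E_{\lambda}^{2,\nu}(\mathcal D)$ for every $f\in L^2(S)$. What remains is the converse together with the lower bound, and (ii).

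The crux is to show that the kernels
\begin{equation*}
K_r(U,W):=(1-r^2)^{-n(n-\nu-\Re(i\lambda))}\int_S \overline{P_{\lambda,\nu}(rV,U)}\,P_{\lambda,\nu}(rV,W)\,{\rm d}V
\end{equation*}
form, after division by $|c_\nu(\lambda)|^2$, an approximate identity on $L^2(S)$ as $r\to1^-$. Since $\det(I-(rV)(rV)^\ast)=(1-r^2)^n$, each factor $P_{\lambda,\nu}(rV,U)$ splits as a power of $(1-r^2)$ times powers of $\det(I-rVU^\ast)$, so $K_r$ becomes an integral over $U(n)$ whose integrand concentrates on $V=U=W$ as $r\to1^-$. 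I would evaluate the normalised limit by a Laplace-type analysis near this diagonal: the hypothesis $\Re(i\lambda)>n-1$ balances the exponents so that the concentration is non-degenerate and selects the dominant contribution, while $i\lambda\notin2\Z^-+n-2\pm\nu$ keeps the resulting Gindikin--Gamma factors finite and non-zero, pinning the constant to $|c_\nu(\lambda)|^2$. Alternatively one may expand $U\mapsto P_{\lambda,\nu}(rV,U)$ into $K$-types on $S=U(n)$, reduce the Hua system (\ref{H}) to radial hypergeometric equations and read off the leading coefficients from the asymptotics of their regular solutions.

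Granting this, (ii) follows by writing $F=P_{\lambda,\nu}f$ and interchanging the $V$- and $W$-integrations, since the inner integral is precisely $K_r(U,\cdot)$; thus $|c_\nu(\lambda)|^{-2}\int_S K_r(U,W)f(W)\,{\rm d}W\to f(U)$ in $L^2(S)$, which is the asserted formula. Taking $W=U$ and integrating over $S$ gives $\lim_{r\to1^-}(1-r^2)^{-n(n-\nu-\Re(i\lambda))}\|F(r\,\cdot)\|_2^2=|c_\nu(\lambda)|^2\|f\|_2^2$, hence $\|F\|_{*,2}\ge|c_\nu(\lambda)|\,\|f\|_2$. For the converse in (i), let $F\in\mathcal E_{\lambda}^{2,\nu}(\mathcal D)$; by Koufany--Zhang there is a unique hyperfunction $f\in B(S)$ with $F=P_{\lambda,\nu}f$, and it suffices to prove $f\in L^2(S)$. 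Put $f_r(U)=|c_\nu(\lambda)|^{-2}(1-r^2)^{-n(n-\nu-\Re(i\lambda))}\int_S\overline{P_{\lambda,\nu}(rV,U)}F(rV)\,{\rm d}V$. Applying the Forelli--Rudin inequality to the conjugate kernel and using $\|F\|_{*,2}<\infty$ bounds $\sup_{0\le r<1}\|f_r\|_2$; weak compactness in $L^2(S)$ then gives $r_j\to1^-$ with $f_{r_j}\rightharpoonup g\in L^2(S)$, while the concentration property forces $f_r\to f$ in the hyperfunction topology. Therefore $g=f\in L^2(S)$ with $\|f\|_2\le|c_\nu(\lambda)|^{-1}\|F\|_{*,2}$, completing the equivalence and the estimates.

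I expect the main obstacle to be the concentration identity, and in particular the evaluation of its constant. One must verify that the normalised $V$-average of the conjugate Poisson kernel tends to a \emph{scalar} multiple of the identity on $L^2(S)$, rather than to some operator depending on the $K$-type, and that this scalar is exactly $|c_\nu(\lambda)|^2$. This demands either a careful local analysis on $U(n)$ or a uniform control of the ratios of Gindikin Gamma functions governing the radial asymptotics, and it is precisely here that both hypotheses on $\lambda$ enter in an essential way.
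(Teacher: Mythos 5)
Your outline reproduces the soft part of the argument correctly, but it leaves the entire analytic core of the theorem unproved. Everything in your proposal funnels into the ``concentration identity'' --- that $|c_\nu(\lambda)|^{-2}K_r$ tends to the identity on $L^2(S)$ --- and for that you offer only two unexecuted sketches (a Laplace-type analysis near the diagonal, or a $K$-type reduction to hypergeometric asymptotics). This is precisely the hard part, and it is where the paper spends almost all of its effort. In the $K$-type picture the operator $K_r$ acts on each component $V_{\textbf{m}}$ by the scalar $(1-r^2)^{-n(n-\nu-\Re(i\lambda))}\,|\Phi^\nu_{\lambda,\textbf{m}}(r)|^2$, so your concentration identity is exactly the statement that $\Phi^\nu_{\lambda,\textbf{m}}(r)\sim c_\nu(\lambda)(1-r^2)^{n(n-\nu-i\lambda)/2}$ as $r\to1^-$ \emph{uniformly in the signature} $\textbf{m}\in\Lambda$. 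The pointwise (fixed $\textbf{m}$) asymptotics is routine; the uniformity in $\textbf{m}$ is not, and the paper obtains it only through the explicit determinant formula of Proposition \ref{prop gene spherical}, $\Phi^\nu_{\lambda,\textbf{m}}=\frac{n!}{d_{\textbf{m}}}\det\bigl(\phi^\nu_{\lambda,(m_i-i+j)}\bigr)_{i,j}$, combined with the two determinant identities/asymptotics of Lemmas A and B in the Appendix; this is also exactly where the hypotheses $i\lambda\notin2\Z^-+n-2\pm\nu$ and $\Re(i\lambda)>n-1$ are consumed. Your alternative Laplace route would face a genuinely oscillatory complex kernel (both $\Im\lambda$ and the factor $(\det(I-ZU^\ast))^{-\nu}$ oscillate) and would moreover have to deliver convergence of operators on $L^2(S)$, uniformly over $K$-types, not just pointwise concentration of a kernel; nothing in the sketch addresses this. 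So the proposal is a correct reduction of the theorem to its key lemma, not a proof of it --- as your own closing paragraph concedes.

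There is also a secondary gap in your converse argument for (i). You take the hyperfunction boundary value $f$ from Koufany--Zhang, extract a weak $L^2$-limit $g$ of the family $f_r$, and then assert that ``the concentration property forces $f_r\to f$ in the hyperfunction topology.'' The concentration property you postulated is an $L^2$ operator statement; applying $K_r$ to a hyperfunction and passing to the limit in the hyperfunction topology requires, once again, uniform (in $\textbf{m}$) control of the scalars $(1-r^2)^{-n(n-\nu-\Re(i\lambda))}|\Phi^\nu_{\lambda,\textbf{m}}(r)|^2$ --- i.e., the Key Lemma in disguise. The paper's route is shorter and avoids the detour: writing $F(rU)=\sum_{\textbf{m}}\Phi^\nu_{\lambda,\textbf{m}}(r)f_{\textbf{m}}(U)$ (Corollary \ref{serie expansion}) and using orthogonality of $K$-types, the hypothesis $\|F\|_{*,2}<\infty$ gives $(1-r^2)^{-n(n-\nu-\Re(i\lambda))}\sum_{\textbf{m}}|\Phi^\nu_{\lambda,\textbf{m}}(r)|^2\|f_{\textbf{m}}\|_2^2\le\|F\|_{*,2}^2$, and the uniform lower bound from the Key Lemma then yields $\sum_{\textbf{m}}\|f_{\textbf{m}}\|_2^2<\infty$ directly, i.e.\ $f\in L^2(S)$, with no compactness argument needed. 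In short: your architecture agrees with the paper's once the Key Lemma is granted, but as written the proposal assumes the very result that constitutes the proof.
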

In order to prove the "if part" of Theorem \ref{p=2}, we first compute explicitly the series expansion of any eigenfunction $F\in \mathcal{E}_{\lambda,\nu}({\cal D})$ near the $K$-orbits $KA_\Xi.0$ and so we are led to establish a uniform asymptotic behavior of  the following Hua type integrals
\begin{equation*}
\Phi_{\lambda,\textbf{m}}^\nu(r)=\int_S P_{\lambda,\nu}(rI,v)\phi_\textbf{m}(v)\,{\rm d}v,
\end{equation*}
where $\phi_{\textbf{m}}$ are the Schur functions with signature $\textbf{m}=(m_1,...,m_n)$. We show (Proposition \ref{prop gene spherical}) that  \(\Phi_{\lambda,\textbf{m}}^\nu(r)\) may be  given in terms of the classical Gauss hypergeometric functions \(_2F_1(a,b;c;x)\). Namely 
\begin{equation*}
\Phi_{\lambda,\textbf{m}}^\nu(r)=\frac{n!}{d_{\textbf{m}}}\det(\phi^\nu_{\lambda,(m_{i}-i+j)}(r))_{1\leq i,j\leq n},
\end{equation*}
where
\begin{equation*}
\begin{split}
&\phi_{\lambda,k}^\nu(r)=r^{\mid k\mid} (1-r^{2})^{\frac{i\lambda+n-\nu}{2}} \frac{(\frac{i\lambda+n+\epsilon(k)\nu}{2})_{\mid k\mid}}{(1)_{\mid k\mid}}\times \\
_2F_1(&\frac{i\lambda+n-\epsilon(k)\nu}{2},\frac{i\lambda+n+\epsilon(k)\nu}{2}+\mid k\mid;1+\mid k\mid;r^{2}), k\in {\Z}
\end{split}
\end{equation*}
with \(\epsilon(k)=\pm 1\) according to \(k\) is positive or non positive integer.\\ 
To get the  desired asymptotic behavior of the generalized spherical functions \(\Phi_{\lambda,\textbf{m}}^\nu\) (Key Lemma) we prove (see Appendix) the following   results on the determinant of hypergeometric functions.  
\begin{lemma A}\label{tec1}
Let $\alpha,\beta$ be two complex numbers such that $\alpha,\alpha+\beta \notin \{1-k: 1\leq k \leq n-1 \}$. For any $n$-tuple $\textbf{p}=(p_1,p_2,\ldots,p_n)$ of complex numbers 
\begin{equation*}
\begin{split}
\det(_2F_1(\alpha,\beta+p_i+j;&\alpha+\beta;1-r^2))_{i,j}=(-1)^{\frac{n(n-1)}{2}}(1-r^2)^{\frac{n(n-1)}{2}}\prod_{k=1}^{n-1}\left(\frac{\alpha+k-1}{\alpha+\beta+k-1}\right)^{n-k}\times \\ 
&\det(_2F_1(\alpha+n-j,\beta+p_i+n,\alpha+\beta+n-j,1-r^2))_{i,j},
\end{split}
\end{equation*}
\end{lemma A}

\begin{lemma B}\label{tec2}
If $\alpha,\beta$ are two complex numbers such that $\beta \notin \{1-k: 1\leq k \leq n-1\}$ and $\alpha+\beta \notin\{1-k: 1\leq k \leq 2(n-1)\}$. Then  
\begin{equation*}
\begin{split}
\frac{1}{d_\textbf{p}}\det({}_2F_1(\alpha+n-j,\beta+p_i+n;&\alpha+\beta+n-j;1-r^2))_{i,j}\sim (-1)^{\frac{n^2+3n-8}{2}} \prod_{k=1}^{n-1}(n-k)! \times \\
& (1-r^2)^\frac{n(n-1)}{2}\prod_{k=1}^{n-1}\frac{(\beta+k-1)^{n-k}}{\displaystyle \prod_{j=1}^{n-k}(\alpha+\beta+n+k-j-2)_2},
\end{split}
\end{equation*}
as $r$ goes to $1^-$, uniformly in \(\textbf{p}\in \C^n\).
\end{lemma B} 
In above  \(\displaystyle d_\textbf{p}=\prod_{1\leq i<j\leq n}\frac{p_i-p_j}{j-i}\), \(p_i\neq p_j\).\\
Before giving the outline of this paper, We should notice to the reader that our methods in this paper are explicit and self-contained. It is our belief that the main results of this paper may be extended to all bounded symmetric domains of tube type.
We intend to return to this problem in the near future.\\
The organization of this paper is as follows. In the next section we review the bounded realization of \(SU(n,n)/S(U(n)\times U(n))\) and give the explicit matrix realization of the generalized Hua operator. In section 3, we define the Poisson transform on line bundles and give the explicit action of the generalized Poisson transform on \(K\)-types. In section 4 we prove  a uniform asymptotic behavior of the generalized spherical function. The last section is devoted to the proof of our results and we end this paper by an appendix where the proofs of Lemma A and Lemma B are given.
\section{The matrix realization of the generalized Hua operators}
\subsection{Preliminaries}
In this section we review some known results of harmonic analysis on $G=SU(n,n)$.\\
Denote by \(M(N,\C)\) the space of all \(N\,\mbox{by}\,N\) complex matrices. Let $G=SU(n,n)$ be the group of all $g\in M(2n,\C)$ with $\det g=1$ keeping invariant the hermitian form
\begin{equation*}
J(z)=\sum^n_{j=1}\mid z_j\mid^2-\sum^{2n}_{j=n+1}\mid z_j\mid^2.
\end{equation*}
As such form, we take the matrix $J=\begin{pmatrix}
I&0\\0&-I
\end{pmatrix}$, where $I$ is the identity matrix of size $n$.\\
Let
\begin{equation*}
K=S(U(n)\times U(n))=\left\lbrace
\begin{pmatrix}
A&0\\0&D
\end{pmatrix}: A, D\in U(n), \, \det(AD)=1
\right\rbrace ,
\end{equation*}
The group \(G\) acts transitively on \(\cal D\) by 
\begin{equation}\label{action G,D}
g.Z=(AZ+B)(CZ+D)^{-1}, \quad \textit{if} \quad g=\begin{pmatrix}
A&B\\C&D
\end{pmatrix},
\end{equation}
and we have the identification \({\cal D}=G/K\) as a homogeneous space.\\ The Shilov boundary \(S\) of \(\cal{D}\) is the unitary group \(U(n)\). The action (\ref{action G,D}) extends naturally to \(\overline{\cal{D}}\) and under this action \(K\) acts transitively on the Shilov boundary \(S\). As a homogeneous space we have \(S=K/L\) where \(L\) is the stabilizer in \(K\) of \(I\) and consists of the following block matrices \(\begin{pmatrix}A&0\\0&A\end{pmatrix}\) where \(A\in U(n)\) with \(\det A=\pm 1\).\\
Let $\mathfrak{g}=su(n,n)$ be the Lie algebra of $G$, under the Cartan involution $\theta(X)=JXJ$ we have $\mathfrak{g}=\mathfrak{k}+\mathfrak{p}$ where  
\begin{equation*}
\mathfrak{p}=\left\lbrace \begin{pmatrix}
0&Z\\
Z^\ast&0
\end{pmatrix}: Z\in M(n,\C)\right\rbrace,
\end{equation*}
and 
\begin{equation*}
\mathfrak{k}=\left\lbrace \begin{pmatrix}
X&0\\0&Y 
\end{pmatrix}: X^\ast+X=Y^\ast+Y=0, \, tr(X+Y)=0\right\rbrace,
\end{equation*}
$\ast$ denoting  conjugate transpose  and $tr$ stands for the trace.\\
For real Lie algebra \(b\) we denote by \(b_c\) its complexification. We have \(\mathfrak{g}_c= \mathfrak{p}_c\oplus \mathfrak{k}_c\)  where \(\mathfrak{g}_c=sl(2n,\C)\),
\begin{equation*}
\mathfrak{p}_c=\left\lbrace \begin{pmatrix}
0&Z\\
W&0
\end{pmatrix}: Z, W\in M(n,\C)\right\rbrace,
\end{equation*}
and
\begin{equation*}
\mathfrak{k}_c=\left\lbrace \begin{pmatrix}
X&0\\
0&Y
\end{pmatrix}: X,Y\in M(n,\C), \, tr(X+Y)=0\right\rbrace .
\end{equation*}
As a Cartan subalgebra of \(\mathfrak{k}\) we choose \(\mathfrak{t}\) to be the set of diagonal matrices. Let \(e_j\) be the linear form on \(\mathfrak{t}_c\) which select the jth entry of the diagonal matrix. We choose an ordering of the roots \(\Delta(\mathfrak{g}_c,\mathfrak{t}_c)\) in which \(\mathfrak{p}^+\) has the form 
\begin{equation*}
\mathfrak{p}^+=\left\lbrace \begin{pmatrix}
0&Z\\
0&0
\end{pmatrix}: Z\in M(n,\C)\right\rbrace ,
\end{equation*} 
and 
\begin{equation*}
\mathfrak{p}^-=\left\lbrace \begin{pmatrix}
0&0\\
Z&0
\end{pmatrix}: Z\in M(n,\C)\right\rbrace ,
\end{equation*}
Thus \(\Delta^+(\mathfrak{g}_c,\mathfrak{t}_c)=\{e_i-e_j: 1\leq i<j\leq 2n\}\).\\
Let \(\Delta^+_n\) be the set of positive noncompact roots. Then \(\{e_j-e_{n+j}: 1\leq j\leq n\}\) is the maximal set of strongly orthogonal noncompact positive roots. Let \(E_{ij}\) denote the \(2n\times 2n\) matrix with entry \(1\) at the \((i,j)\) entry, all other entries being \(0\). Then \(\mathfrak{a}=\displaystyle\sum_{j=1}^n{\R}(E_{j,n+j}+E_{n+j,j})\) is a maximal abelian subalgebra in \(\mathfrak{p}\). 
Thus, we have \(\mathfrak{a}=\left\lbrace H_T=\begin{pmatrix}
0&T\\
T&0
\end{pmatrix}: T=diag(t_1,\cdots,t_n),\, t_j\in {\R}\right\rbrace \).\\ 
Define \( \alpha_j\in \mathfrak{a}^\ast\) by  $\alpha_j(H_T)=t_j$. Let  $\Sigma$ denote the restricted roots system  of $(\mathfrak{g},\mathfrak{a})$ and \(\Sigma^+\) the positive system of \(\Sigma\). Then  
\(
\Sigma=\{\pm 2\alpha_j (1\leq j\leq n), \pm\alpha_i\pm\alpha_j (1\leq i\neq j\leq n)\}\),
and 
\(\Sigma^+=\{2\alpha_j (1\leq j\leq n), \alpha_j\pm\alpha_i (1\leq i<j\leq n\}\), with multiplicities $m_{2\alpha_j}=1$ and $m_{\alpha_i\pm\alpha_j}=2$. 
The set \(\Gamma=\{2\alpha_1,(\alpha_2-\alpha_1),\cdots,(\alpha_n-\alpha_{n-1})\}$ is the set of simple roots in \(\Sigma^+\).\\
We set $\Xi=\Gamma\setminus\{2\alpha_1\}$ and let $\mathfrak{a}_\Xi=\{H\in\mathfrak{a}: \Xi(H)=0\}$. Then $\mathfrak{a}_\Xi={\R}X_0$, where $X_0=\begin{pmatrix}
0&I\\
I&0
\end{pmatrix}$. Let $\rho_{0}$ and $\rho_{\Xi}$ be the linear forms on $\mathfrak{a}_{\Xi}$  defined by $\rho_{0}(X_{0})=n$ and $\rho_{\Xi}=n\rho_{0}$.\\
Let $P_\Xi$ be the standard parabolic subgroup of $G$ associated to $\Xi$. 
Let $P_\Xi=M_\Xi A_\Xi N_\Xi$ be the Langlands decomposition of \(P_\Xi\) such that $A_\Xi\subset A$. Then 
\begin{equation*}
M_\Xi=\left\lbrace \begin{pmatrix}
m_1&m_2\\
m_2&m_1 
\end{pmatrix}\in G: \, \mid \det (m_1+m_2)\mid=1\right\rbrace ,
\end{equation*}
\begin{equation*}
A_\Xi=\left\lbrace \begin{pmatrix}
\cosh tI&\sinh tI\\
\sinh tI&\cosh tI
\end{pmatrix}: t\in{\R}\right\rbrace ,
\end{equation*}
and 
\begin{equation*}
N_\Xi=\left\lbrace \begin{pmatrix}
I+X&-X\\
X&I-X
\end{pmatrix}: X\in M(n,{\C}),\, X+X^\ast=0\right\rbrace .
\end{equation*}
We have \(K\cap M_\Xi=L\) and  \(G/P_\Xi\simeq K/K\cap M_\Xi\).\\
Finally we have the generalized Iwasawa decomposition $G=KM_{\Xi}A_{\Xi}N_{\Xi}$. Each $g\in G$ can be written as 
\begin{equation*}
g=\kappa(g)\mu(g)\exp(H_\Xi(g))n,
\end{equation*}
where $\kappa(g)\in K$, $\mu(g)\in M_{\Xi}$, $H_\Xi(g)\in \mathfrak{a}_\Xi$ and $n\in N_{\Xi}$, with uniqueness  of $H_\Xi(g), n$ and  $\kappa(g)\mu(g)$.\\
\subsection{The matrix realization of the generalized Hua operator}
In this section we give the matrix realization of the generalized Hua operator $\widetilde{\mathcal{H}}_\nu$.\\
Let $P^+, P^-$ and $K_c$ be the analytic subgroups of $G_c=SL(2n,{\C})$ with Lie algebras $\mathfrak{p}^+,\mathfrak{p}^-$ and $\mathfrak{k}_c$ respectively. Then \(P^+K_cP^-\) is a dense open subset of \(SL(2n,{\C})\) containing \(G\). Furthermore  if \(g\in G\) is writing in block form  \(g=\begin{pmatrix} 
A&B\\
C&D
\end{pmatrix}
\) then 
\(g\) may be written uniquely with respect to the  Harish-Chandra decomposition \(P^+K_cP^- \) as 
\begin{equation}
\begin{pmatrix}
A&B\\
C&D
\end{pmatrix}=
\begin{pmatrix} I&BD^{-1}\\
0&I
\end{pmatrix}\begin{pmatrix}
A-BD^{-1}C&0\\
0&D
\end{pmatrix}\begin{pmatrix}
I&0\\
D^{-1}C&I
\end{pmatrix}
\end{equation}
In the sequel we  denote by \(\pi_0(g)\) the \(K_c\)-component of \(g\). Namely \(\pi_0(g)=\begin{pmatrix}
A-BD^{-1}C&0\\
0&D
\end{pmatrix}\).\\
The $G$-homogeneous line bundles $E_\nu=G\times_K{\C}$ are given by the characters of $K$. Any  character of $K$ is  determined by $\nu \in{\Z}$ and is given by 
\begin{equation*}
\tau_\nu (\begin{pmatrix}
A&0\\
0&D
\end{pmatrix})=(\det D)^\nu.
\end{equation*}

If \((\rho,V)\) is a finite dimensional representation of \(K\), sections of the homogeneous vector bundle \(G\times_K V\) can be regarded as elements of   \(C^\infty(G,\rho)\) the space of smooth right \(K\)-covariant functions \(f\) on \(G\), i.e. \(f(gk)= \rho(k)^{-1}f(g), (g\in G, k\in K)\). For $f\in C^{\infty}(G,\rho)$  define the function $\theta_\rho f:\mathcal{D}\rightarrow {\C}$  by  
\begin{equation*}
(\theta_\rho f)(Z)=\rho(\pi_0(g))f(g), 
\end{equation*}
then via \(\theta_\rho\) right \(K\)-covariant functions are  naturally  identified with functions on \(\mathcal{D}\) and the opertor \(\mathcal{H}_\nu\) may be viewed as acting from \(C^\infty(\mathcal{D})\) to \(C^\infty(\mathcal{D},\mathfrak{k}_c)\). In this subsection we give explicitly this operator which we denote by \(\widetilde{\mathcal{H}}_\nu\).\\
Write \(X\in \mathfrak{g}_c\) as \(X=X_0+iY_0\) with \(X_0,Y_0\in \mathfrak{g}\). If \(f\) is a differentiable function on \(G\), define the function $Xf$ by 
\begin{equation*}
Xf(g)=\frac{d}{dt}f(g{\rm e}^{tX_0})_{\mid t=0}+i\frac{d}{dt}f(g{\rm e}^{tY_0})_{\mid t=0},
\end{equation*}
For $U$ in $M(n,\C)$ define $L^{+}(U)=\begin{pmatrix}
0&U\\0&0
\end{pmatrix}$ and $L^{-}(U)=\begin{pmatrix}
0&0\\U^*&0
\end{pmatrix}$.\\
Then $L^{\pm}$ are ${\C}$-linear isomorphism from $M(n,{\C})$ to $\mathfrak{p}^{\pm}$.\\
We denote by $E_{ij}$ the $n\,\mbox{by}\,n$ matrix with $1$ in the $(i,j)$ position and $0$ everywhere. We normalize the Killing form so that $L^+(E_{ij})$ is an orthonormal  basis of $\mathfrak{p}^+$ and $L^-(E_{ij})$ is its dual basis in $\mathfrak{p}^-$.\\
Then the Hua operator $\mathcal{H}_\nu$  is the homogeneous differential operator from $C^{\infty}(E_{\nu})$ to $C^{\infty}(G\times_K \C\otimes \mathfrak{k}_c)$  defined by 
\begin{equation*}\label{Hua}
 \mathcal{H}_\nu=\sum_{\begin{array}{c} 
\scriptstyle 1\leq i,j \leq n \\ 
\scriptstyle 1\leq p,q \leq n 
\end{array}}L^+(E_{ij})L^-(E_{pq})\otimes [L^+(E_{pq}),L^-(E_{ij})].
\end{equation*}
As explained before, we transfer the operator $ \mathcal{H}_\nu$  to an equivalent   matrix-valued operator 
\(\widetilde{\mathcal{H}}_\nu\) which acts on \(\mathcal{D}\). Namely, the operator \(\widetilde{\mathcal{H}}_\nu\) acting from \(C^\infty(\mathcal{D})\) to \(C^\infty(\mathcal{D}, \mathfrak{k}_c)\) is given by 
\begin{equation*}
\widetilde{\mathcal{H}}_\nu=\theta_{\tau_{\nu}\otimes Ad} \circ \mathcal{H}_\nu\circ \theta_{\tau_\nu}^{-1}.
\end{equation*}
Finally, we define the differential operator \(\partial: C^\infty(\mathcal{D})\rightarrow C^\infty(\mathcal{D}, M(n,\C))\)    as follows
\begin{eqnarray*}
\partial F=\left(\frac{\partial F}{\partial z_{ij}}\right)_{1\leq i,j\leq n}
\end{eqnarray*}
Then the  main result of this section is as follows
\begin{proposition}\label{Hua operator}
The generalized Hua operator $\widetilde{\mathcal{H}}_\nu$ is given by
\begin{equation*}
\widetilde{\mathcal{H}}_\nu=
\left(\begin{smallmatrix}
(I-ZZ^{\ast})\overline{\partial}(I-Z^{\ast}Z)\partial^{\prime}-\nu(I-ZZ^{\ast})\overline{\partial}Z^{\ast}&0\\
0&-\partial^{\prime}(I-ZZ^{\ast})\overline{\partial}(I-Z^{\ast}Z)+\nu Z^{\ast}\overline{\partial}(I-ZZ^{\ast})
\end{smallmatrix}\right)
\end{equation*}
(with the understanding that \(\overline{\partial}\) and \(\partial^{\prime}\) do not differentiate the matrices \(I-Z^{\ast}Z\) and \(I-ZZ^{\ast}\)).
\end{proposition}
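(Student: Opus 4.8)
The plan is to compute $\widetilde{\mathcal H}_\nu=\theta_{\tau_\nu\otimes Ad}\circ\mathcal H_\nu\circ\theta_{\tau_\nu}^{-1}$ in three stages: reduce the defining sum to transferred first–order operators, insert the explicit $\mathfrak k_c$–brackets \emph{twisted by} $Ad(\pi_0)$, and contract indices. Since $\theta_{\tau_\nu\otimes Ad}$ multiplies the $\mathfrak k_c$–value of a section by $Ad(\pi_0(g))$, I would start from
\[
\widetilde{\mathcal H}_\nu F(g\cdot 0)=\sum_{i,j,p,q}\big(\widetilde{L^+(E_{ij})}\,\widetilde{L^-(E_{pq})}F\big)\,Ad(\pi_0(g))[L^+(E_{pq}),L^-(E_{ij})],
\]
where $\widetilde{L^\pm(U)}=\theta_{\tau_\nu}\circ L^\pm(U)\circ\theta_{\tau_\nu}^{-1}$. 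A direct matrix computation (using $L^-(E_{ij})=\left(\begin{smallmatrix}0&0\\E_{ji}&0\end{smallmatrix}\right)$) gives
\[
[L^+(E_{pq}),L^-(E_{ij})]=\begin{pmatrix}\delta_{qj}E_{pi}&0\\0&-\delta_{ip}E_{jq}\end{pmatrix},
\]
so only an upper block built from $E_{pi}$ and a lower block built from $E_{jq}$ survive, and $Ad(\pi_0(g))=Ad\,\mathrm{diag}((A^*)^{-1},D)$ conjugates them. I flag already here that this $Ad(\pi_0)$ twist is not cosmetic: it is precisely what will turn the \emph{half}–integer powers of $I-ZZ^*$ and $I-Z^*Z$ produced below into the integer powers displayed in the statement.

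Next I would make $\widetilde{L^\pm(U)}$ explicit. Writing $f=\theta_{\tau_\nu}^{-1}F$, so that $f(g)=(\det D)^{-\nu}F(g\cdot 0)$, I compute $\widetilde{L^\pm(U)}$ by differentiating along the real one–parameter subgroups generated by the real and imaginary parts of $L^\pm(U)\in\mathfrak g_c$ and invoking the $\C$–linear extension of $X\mapsto Xf$. Because $\mathfrak p^+$ (resp. $\mathfrak p^-$) is the holomorphic (resp. antiholomorphic) tangent space, the antiholomorphic (resp. holomorphic) part cancels and one is left with a holomorphic (resp. antiholomorphic) first–order operator plus a zeroth–order term coming from $d(\det D)^{-\nu}$. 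The base–point velocity of $L^+(U)$ is $(A-BD^{-1}C)UD^{-1}=(A^*)^{-1}UD^{-1}$ by the $SU(n,n)$ relations, whence
\[
\widetilde{L^+(U)}F=\sum_{k,l}\big((A^*)^{-1}UD^{-1}\big)_{kl}\,\partial_{z_{kl}}F+\nu(\cdots),\qquad \widetilde{L^-(U)}F=\sum_{k,l}\overline{\big((A^*)^{-1}UD^{-1}\big)_{kl}}\,\partial_{\bar z_{kl}}F+\nu(\cdots).
\]
Evaluating at the symmetric transvection $g_Z$ (where $(A^*)^{-1}=(I-ZZ^*)^{1/2}$, $D^{-1}=(I-Z^*Z)^{1/2}$ and $D^{-1}C=Z^*$) renders every coefficient an explicit, though half–integer power, function of $Z$; this is the most economical base point, and since the $\widetilde{L^\pm(U)}$ are genuine operators on $\mathcal D$ the result is independent of this choice.

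Finally I would assemble everything. Using the conjugation $Ad(\pi_0(g_Z))\colon E_{pi}\mapsto(I-ZZ^*)^{1/2}E_{pi}(I-ZZ^*)^{-1/2}$ on the upper block and setting $P=(I-ZZ^*)^{1/2}$, $Q=(I-Z^*Z)^{1/2}$, the triple sum $\sum_{i,p,q}$ collapses through the three contractions $\sum_i (P)_{ki}(P^{-1})_{ib}=\delta_{kb}$, $\sum_p (P)_{ap}\overline{(P)_{k'p}}=(I-ZZ^*)_{ak'}$ and $\sum_q (Q)_{ql}\overline{(Q)_{ql'}}=(I-Z^*Z)_{l'l}$ (the last two because $P,Q$ are Hermitian), so the half powers recombine into exactly $(I-ZZ^*)\overline\partial(I-Z^*Z)\partial'$; the lower block is treated identically, the conjugation now being by $D$, and yields $-\partial'(I-ZZ^*)\overline\partial(I-Z^*Z)$. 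The surviving first–order terms arise from two sources, the $\nu$–cocycle and the derivatives that the outer $\widetilde{L^+}$ throws onto the $Z$–dependent coefficients of the inner $\widetilde{L^-}$, and the plan is to show that the non–$\nu$ contributions cancel while the $\nu$ ones organise into $-\nu(I-ZZ^*)\overline\partial Z^*$ and $+\nu Z^*\overline\partial(I-ZZ^*)$; the stated convention that $\overline\partial,\partial'$ do not differentiate $I-Z^*Z$ and $I-ZZ^*$ is exactly the normal ordering recording this. I expect the main obstacle to be precisely this last bookkeeping: controlling the interplay of the half–integer powers, the $Ad(\pi_0)$ twist, the cocycle, and the product rule, and verifying that everything except the two displayed $\nu$–terms cancels.
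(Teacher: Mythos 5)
Your very first display---the reduction of $\widetilde{\mathcal H}_\nu$ to the composition $\widetilde{L^+(E_{ij})}\,\widetilde{L^-(E_{pq})}$ of \emph{transferred first-order} operators---is where the argument breaks, and everything downstream inherits the error. The conjugation $\theta_{\tau_\nu}\circ L^+(E_{ij})L^-(E_{pq})\circ\theta_{\tau_\nu}^{-1}$ does \emph{not} factor as $\bigl(\theta_{\tau_\nu}\circ L^+(E_{ij})\circ\theta_{\tau_\nu}^{-1}\bigr)\circ\bigl(\theta_{\tau_\nu}\circ L^-(E_{pq})\circ\theta_{\tau_\nu}^{-1}\bigr)$: the function $L^-(E_{pq})\theta_{\tau_\nu}^{-1}F$ is no longer right $K$-covariant, so pushing it down to $\mathcal D$ through the section $Z\mapsto g_Z$ and re-lifting it covariantly changes it off the section, and the outer derivative detects the difference because the curves $g_Z e^{tX}$, $X\in\mathfrak p^+$, leave the section (their vertical components are governed by $[\mathfrak p^+,\mathfrak p^-]\subset\mathfrak k_c$). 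For the same reason your claim that the $\widetilde{L^\pm(U)}$ are ``genuine operators on $\mathcal D$'' independent of the base point is false: replacing the lift $g$ by $gk$ replaces $U$ by an $Ad(k)$-conjugate, so these first-order operators are lift-dependent; only the full Hua combination descends, precisely because $\mathcal H_\nu$ preserves covariance while the individual $L^\pm$ do not. The failure is already visible for $n=1$, $\nu=0$, where your recipe gives
\begin{equation*}
\widetilde{L^+}\widetilde{L^-}F=(1-|z|^2)\,\partial\bigl[(1-|z|^2)\bar\partial F\bigr]=(1-|z|^2)^2\partial\bar\partial F-\bar z(1-|z|^2)\bar\partial F,
\end{equation*}
whereas the transferred operator is $(1-|z|^2)^2\partial\bar\partial F$ with \emph{no} first-order term: for $\nu=0$ all $\mathfrak k_c$-derivatives of a right $K$-invariant lift vanish, so the transfer of $L^+L^-$ equals half the transfer of the Casimir, i.e.\ a multiple of the invariant Laplacian (this also matches the Proposition itself at $n=1$, $\nu=0$). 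Since for $n=1$ there is a single bracket $[L^+,L^-]=\left(\begin{smallmatrix}1&0\\0&-1\end{smallmatrix}\right)\neq 0$, the spurious term cannot cancel against anything; so your plan to ``show that the non-$\nu$ contributions cancel'' cannot succeed---those contributions must never be produced in the first place.

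The missing idea, which is the backbone of the paper's proof, is to use the $Ad(K_c)$-invariance of the Hua element to move the twist from the brackets onto the differentiation directions: for each fixed $g$,
\begin{equation*}
\widetilde{\mathcal H}_\nu F(g.0)=\tau_\nu(\pi_0(g))\sum_{i,j,p,q}\Bigl(Ad(\pi_0(g)^{-1})L^+(E_{ij})\,Ad(\pi_0(g)^{-1})L^-(E_{pq})\,\theta_{\tau_\nu}^{-1}F\Bigr)(g)\otimes\bigl[L^+(E_{pq}),L^-(E_{ij})\bigr],
\end{equation*}
with \emph{untwisted} brackets. The twisted directions, made explicit in the paper's formulas $Ad(\pi_0(g)^{-1})L^{+}(E_{ij})=L^{+}((A-BD^{-1}C)^{-1}E_{ij}D)$ and its $L^-$ analogue, are exactly what compensates the failure of $L^\pm$ to preserve covariance; with them, the whole Leibniz computation can be carried out on the group: the inner twisted derivative is evaluated via the paper's Lemma~\ref{L+-}, the outer derivative splits into the cocycle term $I^{pq}_{ij}$ (Lemma~\ref{lemm hua op}, which is the sole source of the $\nu$-terms) plus the main term $J^{pq}_{ij}$, and evaluation at $g_Z$ produces the integer powers $(I-ZZ^*)$, $(I-Z^*Z)$ directly---no half-powers ever need to ``recombine.'' The pieces of your plan that do survive are peripheral ones also present in the paper: the bracket formula $[L^+(E_{pq}),L^-(E_{ij})]=\left(\begin{smallmatrix}\delta_{qj}E_{pi}&0\\0&-\delta_{ip}E_{jq}\end{smallmatrix}\right)$, the cocycle as the origin of the first-order $\nu$-terms, and the evaluation at $g_Z$; but the reduction step must be replaced by the invariance argument above.
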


We shall need the following lemma.

\begin{lemma}\label{lemm hua op}
For every $U\in M(n,\C)$ and $g=\begin{pmatrix}
A&B\\
C&D
\end{pmatrix} \in G$, we have
\begin{equation}\label{E1}
 L^+(U)\tau_\nu^{-1}(\pi_0(g))=-\nu(\det D)^{-\nu}tr(D^{-1}CU)
\end{equation}
 and
 \begin{equation}\label{E2}
 L^-(\overline{U})\tau_\nu^{-1}(\pi_0(g))=0.
 \end{equation}
\end{lemma}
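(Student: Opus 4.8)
The plan is to unwind the two identities into an explicit differentiation along the two nilpotent one-parameter subgroups generated by $L^+(U)$ and $L^-(\overline U)$. The starting observation is that, by the definition of $\pi_0$ through the Harish-Chandra decomposition and of the character $\tau_\nu$, the function being differentiated depends only on the lower right block $D=D(g)$:
\[
g=\begin{pmatrix} A&B\\ C&D\end{pmatrix}\longmapsto \tau_\nu^{-1}(\pi_0(g))=(\det D)^{-\nu}.
\]
Since $\tau_\nu^{-1}\circ\pi_0$ is holomorphic on the dense open set $P^+K_cP^-\supset G$, the complex-linear extension defining $Xf$ reduces, for any $X\in\mathfrak g_c$, to the ordinary complex derivative $\tfrac{d}{dt}f(g\exp(tX))\big|_{t=0}$. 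This is the only point requiring a short justification: it follows from the fact that the differential of a holomorphic function is complex linear on the complexified tangent space, so that $X_0f(g)+i\,Y_0f(g)=df_g(g(X_0+iY_0))$ coincides with the complex derivative along $\exp(tX)$.

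Next I would compute the two right translates explicitly. Because $L^+(U)^2=0$ and $L^-(\overline U)^2=0$, the exponentials are polynomial in $t$:
\[
\exp(tL^+(U))=\begin{pmatrix} I&tU\\0&I\end{pmatrix},\qquad \exp(tL^-(\overline U))=\begin{pmatrix} I&0\\ tU^{T}&I\end{pmatrix}.
\]
Right-multiplying $g$ by these matrices shows that the lower right block transforms as $D\mapsto D+tCU$ in the first case, while in the second case the last $n$ columns of $g$, namely $\begin{pmatrix} B\\ D\end{pmatrix}$, are left unchanged, so $D\mapsto D$. Consequently
\[
\tau_\nu^{-1}\bigl(\pi_0(g\exp(tL^+(U)))\bigr)=(\det(D+tCU))^{-\nu},\qquad \tau_\nu^{-1}\bigl(\pi_0(g\exp(tL^-(\overline U)))\bigr)=(\det D)^{-\nu}.
\]

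Identity \eqref{E2} is then immediate, since the corresponding right-hand side is constant in $t$ and its derivative at $t=0$ vanishes. For \eqref{E1} I would apply Jacobi's formula $\tfrac{d}{dt}\det(D+tM)\big|_{t=0}=\det(D)\,\mathrm{tr}(D^{-1}M)$ with $M=CU$, combined with the chain rule for $s\mapsto s^{-\nu}$, to obtain
\[
\frac{d}{dt}(\det(D+tCU))^{-\nu}\Big|_{t=0}=-\nu(\det D)^{-\nu-1}\det(D)\,\mathrm{tr}(D^{-1}CU)=-\nu(\det D)^{-\nu}\,\mathrm{tr}(D^{-1}CU),
\]
which is precisely \eqref{E1}. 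The whole argument is an elementary computation: the genuinely delicate step is the passage to the complex derivative via holomorphy, while everything else is forced by the nilpotency of $L^{\pm}$ and by Jacobi's formula.
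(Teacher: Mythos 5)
Your proof is correct, and it reorganizes the argument in a way that genuinely differs from the paper's. The paper never invokes holomorphy: it decomposes $L^+(U)=\tfrac{1}{2}(X_0-iY_0)$ and $L^-(\overline U)=\tfrac{1}{2}(X_0+iY_0)$ with $X_0=\begin{pmatrix}0&U\\U^*&0\end{pmatrix}$ and $Y_0=\begin{pmatrix}0&iU\\-iU^*&0\end{pmatrix}$ both in $\mathfrak g$, proves the real-direction formula $X\tau_\nu^{-1}(\pi_0(g))=-\nu\,\tau_\nu^{-1}(\pi_0(g))\,tr\bigl(D^{-1}(C\beta+D\delta)\bigr)$ for $X=\begin{pmatrix}\alpha&\beta\\ \gamma&\delta\end{pmatrix}\in\mathfrak g$ --- the same differentiation of $(\det D)^{-\nu}$ that you perform, but along curves that stay inside the real group $G$ --- and then obtains the two identities by taking the prescribed complex-linear combinations; in particular, the vanishing in the second identity emerges as a cancellation between the $X_0$- and $Y_0$-derivatives. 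You instead observe that $\tau_\nu^{-1}\circ\pi_0=(\det D)^{-\nu}$ extends holomorphically to $P^+K_cP^-$, so the two-term real definition of $Xf$ collapses to a single complex derivative along $g\exp(tX)$, and nilpotency of $L^{\pm}$ gives the curves exactly: the lower-right block moves by $tCU$ under $\exp(tL^+(U))$ and is untouched by $\exp(tL^-(\overline U))$. Your route makes the second identity structural rather than computational --- it is precisely the right $P^-$-invariance of $\pi_0$ --- at the cost of the holomorphy justification, which you supply and which is valid: the real differential of a holomorphic function is $\C$-linear, so the paper's $X_0f+iY_0f$ equals the derivative of the holomorphic extension in the direction $gX$. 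The paper's route avoids any extension of the function off $G$ but hides the reason for the vanishing inside a cancellation. Both proofs ultimately rest on the same two facts: $\tau_\nu^{-1}(\pi_0(g))=(\det D)^{-\nu}$ and Jacobi's formula for the derivative of a determinant.
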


\begin{proof}
Let $U\in M(n,\C)$. Put $L^+(U)=\frac{1}{2}(X_0-iY_0)$ and $L^{-}(\overline{U})=\frac{1}{2}(X_0+iY_0)$, then
\begin{equation*}
X_0=\begin{pmatrix}
0&U\\
U^*&0
\end{pmatrix}\quad \mbox{and} \quad Y_0=\begin{pmatrix}
0&iU\\
-iU^*&0
\end{pmatrix}.
\end{equation*}
Fix $g=\begin{pmatrix}
A&B\\
C&D
\end{pmatrix} \in G$. If $X=\begin{pmatrix}
\alpha&\beta\\
\gamma&\delta
\end{pmatrix} \in \mathfrak{g}$ then 
\begin{equation}\label{*}
X\tau_\nu^{-1}(\pi_0(g))=-\nu \tau_\nu^{-1}(\pi_0(g))tr(D^{-1}(C\beta+D\delta)).
\end{equation}
Applying (\ref{*}) to $X_0$ and $Y_0$, we easily get 
\[X_0\tau_\nu^{-1}(\pi_0(g))=-\nu \tau_\nu^{-1}(\pi_0(g))tr(D^{-1}CU),\] 
and 
\[Y_0\tau_\nu^{-1}(\pi_0(g))=-i\nu \tau_\nu^{-1}(\pi_0(g))tr(D^{-1}CU),\]
and the result follows.
\end{proof}

We also need the following result. 

\begin{lemma}\label{L+-}
Let \(F\) be a differentiable function on \(\mathcal{D}\). Then for every $U\in M(n,{\C})$,
\begin{itemize}
\item[i)] \((L^+(U)\theta_0^{-1} F)(g)=tr(\partial^{\prime} F(g.0)(A^\ast)^{-1}U D^{-1})\)
\item[ii)]\((L^-(\overline{U})\theta_0^{-1} F)(g)=tr(\overline{\partial}F(g.0)\overline{(A^\ast)^{-1}U D^{-1}})\)
\end{itemize}
with $g=\begin{pmatrix}
A&B\\
C&D
\end{pmatrix} \in G$.
\end{lemma}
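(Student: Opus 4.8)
The plan is to compute the complexified left-invariant operators $L^+(U)$ and $L^-(\overline U)$ directly on the pulled-back function $\theta_0^{-1}F$, which for the trivial character is simply $(\theta_0^{-1}F)(g)=F(g\cdot 0)$. Writing, as in the proof of Lemma \ref{lemm hua op}, $L^+(U)=\tfrac12(X_0-iY_0)$ and $L^-(\overline U)=\tfrac12(X_0+iY_0)$ with $X_0=\begin{pmatrix}0&U\\U^*&0\end{pmatrix}$ and $Y_0=\begin{pmatrix}0&iU\\-iU^*&0\end{pmatrix}$ in $\mathfrak g$, the convention for the action of an element of $\mathfrak g_c$ reduces the whole computation to the two real derivatives $\frac{d}{dt}F(ge^{tX_0}\cdot 0)|_{t=0}$ and $\frac{d}{dt}F(ge^{tY_0}\cdot 0)|_{t=0}$, assembled with the coefficients $\tfrac12$ and $\mp\tfrac i2$.

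First I would compute the infinitesimal motion of the base point. Since $e^{tX_0}\cdot 0=tU+O(t^2)$, the velocity of $e^{tX_0}\cdot 0$ at the origin is $U$, and likewise the $Y_0$-curve moves with velocity $iU$. Differentiating the M\"obius action $g\cdot Z=(AZ+B)(CZ+D)^{-1}$ at $Z=0$ in the direction $H$ gives $\frac{d}{dt}(g\cdot Z(t))|_0=(A-BD^{-1}C)\,H\,D^{-1}$, so the base-point velocities are $(A-BD^{-1}C)UD^{-1}$ and $i(A-BD^{-1}C)UD^{-1}$ respectively.

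Next I would identify the matrix $A-BD^{-1}C$. Using the defining relations of $SU(n,n)$ coming from $g^{*}Jg=J$, namely $A^{*}B=C^{*}D$ and $A^{*}A-C^{*}C=I$, one solves $B=(A^{*})^{-1}C^{*}D$ and substitutes to get $A-BD^{-1}C=A-(A^{*})^{-1}C^{*}C=A-(A^{*})^{-1}(A^{*}A-I)=(A^{*})^{-1}$; here $A$ is invertible because $A^{*}A=I+C^{*}C$ is positive definite, and $D$ is invertible since it is the denominator of the action at $Z=0$. Hence the relevant velocity is $M:=(A^{*})^{-1}UD^{-1}$ for the $X_0$-direction and $iM$ for the $Y_0$-direction.

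Finally I would assemble the pieces by Wirtinger calculus: writing $\frac{d}{dt}F(g\cdot Z(t))|_0=\sum_{i,j}\frac{\partial F}{\partial z_{ij}}(g\cdot 0)\dot W_{ij}+\sum_{i,j}\frac{\partial F}{\partial\bar z_{ij}}(g\cdot 0)\overline{\dot W_{ij}}$ with $\dot W=M$ (resp.\ $iM$), the combination $\tfrac12(X_0\text{-term})-\tfrac i2(Y_0\text{-term})$ that defines $L^+(U)$ cancels the antiholomorphic contribution and leaves $\sum_{i,j}\frac{\partial F}{\partial z_{ij}}(g\cdot 0)M_{ij}=tr(\partial' F(g\cdot 0)\,M)$, which is (i); dually, the combination $\tfrac12(X_0\text{-term})+\tfrac i2(Y_0\text{-term})$ defining $L^-(\overline U)$ cancels the holomorphic part and leaves the antiholomorphic sum, giving (ii). The main obstacle, beyond this sign bookkeeping, is tracking transposes correctly when rewriting the index sums $\sum_{i,j}\frac{\partial F}{\partial z_{ij}}M_{ij}$ as matrix traces, so that $\partial$ is transposed in (i) and conjugated in (ii) exactly as in the statement; it is here, rather than in the differential-geometric input, that the computation must be carried out with the most care.
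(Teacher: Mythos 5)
Your proposal is correct and follows essentially the same route as the paper: differentiate the M\"obius action at the base point to get the velocity $(A^\ast)^{-1}UD^{-1}$, then split into holomorphic and antiholomorphic parts via the decomposition $L^{+}(U)=\tfrac12(X_0-iY_0)$, $L^{-}(\overline U)=\tfrac12(X_0+iY_0)$ and Wirtinger calculus, exactly as the paper does by applying its chain-rule formula to $X_0=L^{+}(U)+L^{-}(\overline U)$ and $Y_0=i(L^{+}(U)-L^{-}(\overline U))$. The only difference is that you make explicit the identity $A-BD^{-1}C=(A^\ast)^{-1}$ coming from $g^{\ast}Jg=J$, which the paper uses without comment when it writes $d(g.Z_X(t))_{\mid t=0}=(A^\ast)^{-1}\beta D^{-1}\,\mathrm{d}t$.
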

\begin{proof} 
Let \(X=\begin{pmatrix}
\alpha&\beta\\
\gamma&\delta
\end{pmatrix} \in \mathfrak{g}\). Put \(Z_X(t)=\exp tX.0\). By differentiation of \(g.Z_X(t)\) we obtain
\[ d (g.Z_X(t))_{\mid t=0}=(A^\ast)^{-1}\beta D^{-1}\, {\rm d}t,\]
where $g=\begin{pmatrix}
A&B\\
C&D
\end{pmatrix} \in G$, from which we deduce 
\begin{equation}\label{E}
X\theta_0^{-1}F(g)=tr\left(\partial^{\prime} F(g.0)(A^\ast)^{-1}\beta D^{-1}+\overline{\partial}F(g.0)\overline{(A^\ast)^{-1}U D^{-1}}\right).
\end{equation}
Next applying (\ref{E}) to \((L^{+}(U)+L^{-}(\overline{U}))\) and \(i(L^+(U)-L^{-}(\overline{U}))\) we get the desired result and the proof is finished.
\end{proof}
Now we come back  to the proof of Proposition \ref{Hua operator}.\\
\begin{proof} 
For $F\in \mathcal{C}^\infty(\mathcal{D})$ by definition
\[
(\widetilde{\mathcal{H}}_\nu F)(g.0)=\tau_\nu(\pi_0(g)) \sum_{\begin{array}{c} 
\scriptstyle 1\leq i,j \leq n \\ 
\scriptstyle 1\leq p,q \leq n 
\end{array}}\left( Ad(\pi_0(g)^{-1})L^{+}(E_{ij}) Ad(\pi_0(g)^{-1})L^{-}(E_{pq}) \theta_{\tau_\nu}^{-1}F\right)(g)\otimes [L^{+}(E_{pq}),L^{-}(E_{ij})].
\]
We have 
\begin{equation}\label{**}
\left( Ad(\pi_0(g)^{-1})L^{-}(E_{pq})\theta_{\tau_\nu}^{-1}F\right)(g)= \tau_\nu(\pi_0(g)^{-1})\left( Ad(\pi_0(g)^{-1})L^{-}(E_{pq})\theta_0^{-1}F\right)(g).
\end{equation}
by (\ref{E2}).\\
Then applying $Ad(\pi_0(g)^{-1})L^{+}(E_{ij})$ to (\ref{**}) as a function of $g$ we get 
\begin{eqnarray*}
&&\left( Ad(\pi_0(g)^{-1})L^{+}(E_{ij}) Ad(\pi_0(g)^{-1})L^{-}(E_{pq}) \theta_{\tau_\nu}^{-1}F\right)(g)\\
&=& \left( Ad(\pi_0(g)^{-1})L^{+}(E_{ij}) \tau_\nu(\pi_0(g)^{-1})\right) \left( Ad(\pi_0(g)^{-1})L^{-}(E_{pq}) \theta_0^{-1}F\right)(g)\\
&+&\tau_\nu(\pi_0(g)^{-1}) \left( Ad(\pi_0(g)^{-1})L^{+}(E_{ij})Ad(\pi_0(g)^{-1})L^{-}(E_{pq}) \theta_0^{-1}F \right)(g)\\
&=& I_{ij}^{pq}(g)+J_{ij}^{pq}(g).
\end{eqnarray*}
Next, if \(g\) is writing as block matrices \(g=\begin{pmatrix}
A&B\\
C&D
\end{pmatrix}\) 
a direct calculation gives the following formula
\begin{equation}\label{E3}
Ad(\pi_0(g)^{-1})L^{+}(E_{ij})=L^{+}((A-BD^{-1}C)^{-1}E_{ij}D)
\end{equation}
and
\begin{equation}\label{E4}
Ad(\pi_0(g)^{-1})L^{-}(E_{pq})=L^{-}(^t(D^{-1}E_{pq}(A-BD^{-1}C)))
\end{equation}
Now given $Z\in \mathcal{D}$, define $g_Z$ by
\(
g_Z=\begin{pmatrix}
(I-ZZ^*)^{\frac{-1}{2}}&Z(I-Z^*Z)^{\frac{-1}{2}}\\
Z^*(I-ZZ^*)^{\frac{-1}{2}}&(I-Z^*Z)^{\frac{-1}{2}}
\end{pmatrix}.
\)
Then \(g_Z\in G\) and \(g_Z.0=Z\).\\  
From (\ref{E3}), (\ref{E1}) and  (ii) of Lemma \ref{L+-} it follows that 
\[
I_{ij}^{pq}(g_Z)=-\nu \tau_\nu(\pi_0(g)^{-1}) tr(Z^*(I-ZZ^*)^{-1}E_{ij})((I-ZZ^*)\overline{\partial}F(Z)(I-Z^*Z))_{pq}.
\]

Using (\ref{E4}) and (ii) of  Lemma \ref{L+-}, we easily see that 
\[
Ad(\pi_0(g)^{-1})L^{-}(E_{pq}) \theta_0^{-1}F(g_Z)=((I-ZZ^*)\overline{\partial}F(Z)(I-Z^*Z))_{pq}.
\]

By applying $Ad(\pi_0(g)^{-1})L^{+}(E_{ij})$ to the above equality as a function of $g$, we obtain from (ii) of Lemma \ref{lemm hua op}, 
\[
J_{ij}^{pq}(g_Z)=\tau_\nu(\pi_0(g)^{-1})((I-ZZ^*)\overline{\partial}(I-Z^*Z))_{pq}\frac{\partial F}{\partial z_{ij}}(Z).
\]
Hence
\begin{eqnarray*}\begin{split}
(\widetilde{\mathcal{H}}_\nu F)(Z)&= \sum_{i,j}\sum_{p,q}(-\nu (Z^*(I-ZZ^*)^{-1})_{ji}((I-ZZ^*)\overline{\partial}F(Z)(I-Z^*Z))_{pq}[L^{+}(E_{pq}),L^{-}(E_{ij})]\\
&+\sum_{i,j}\sum_{p,q}((I-ZZ^*)\overline{\partial}(I-Z^*Z))_{pq}\frac{\partial F}{\partial z_{ij}}(Z) [L^{+}(E_{pq}),L^{-}(E_{ij})],
\end{split}\end{eqnarray*}
and the Proposition follows.
\end{proof}

\begin{corollary}\label{eigenvalue}
For $U$ fixed in $S$, the function $Z \rightarrow P_{\lambda,\nu}(Z,U)$ satisfies the generalized Hua system:
\[
\widetilde{\mathcal{H}}_\nu P_{\lambda,\nu}(Z,U)=-\frac{1}{4}(\lambda^2+(n-\nu)^2)P_{\lambda,\nu}(Z,U).J.
\]
\end{corollary}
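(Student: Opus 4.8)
The plan is to substitute the explicit Poisson kernel into the explicit form of $\widetilde{\mathcal H}_\nu$ furnished by Proposition \ref{Hua operator} and to verify the two scalar identities coming from the two diagonal blocks. Since $F\cdot J$ has diagonal blocks $FI$ and $-FI$, for the scalar function $F(Z)=P_{\lambda,\nu}(Z,U)$ these read
\[
[(I-ZZ^{\ast})\overline{\partial}(I-Z^{\ast}Z)\partial^{\prime}-\nu(I-ZZ^{\ast})\overline{\partial}Z^{\ast}]F=-\tfrac14(\lambda^2+(n-\nu)^2)F\,I,
\]
together with the companion identity for the lower block carrying the opposite sign. Setting $s=\tfrac{i\lambda+n-\nu}{2}$ and taking logarithms, one has $\log F=s\log\det(I-ZZ^{\ast})-(s+\nu)\log\det(I-ZU^{\ast})-s\log\det(I-\overline{Z}U^{T})$, so the entire computation is driven by the logarithmic-derivative formula $\partial^{\prime}\log\det(I-ZW)=-W(I-ZW)^{-1}$ for constant $W$ and its $\overline{\partial}$-conjugate analogue, applied with $W=Z^{\ast},U^{\ast}$ and their conjugates. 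The structural reason this works is that the prefactors $(I-ZZ^{\ast})$ and $(I-Z^{\ast}Z)$ are precisely engineered to cancel the inverse factors produced by differentiating the determinants.

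Because $\widetilde{\mathcal H}_\nu$ is the transfer of the homogeneous operator $\mathcal H_\nu$ and hence $G$-equivariant, while the kernels $P_{\lambda,\nu}(\cdot,U)$ transform covariantly under $G$ and $J\in\mathfrak{k}_c$ is fixed by $Ad(K)$, I would first reduce the verification to the single base point $Z=0$: the identity there, valid for every $U\in S$, propagates to all of $\mathcal D$. At $Z=0$ the prefactors become $I$ and, under the normal-ordering convention, the explicit factors $Z^{\ast}$ in the two $\nu$-terms vanish, so both blocks reduce to the mixed second derivatives $\sum_r\partial_{\bar z_{pr}}\partial_{z_{qr}}F$ and $\sum_r\partial_{z_{rp}}\partial_{\bar z_{rq}}F$ at the origin. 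A short computation with the formulas above splits each into a ``curvature'' contribution $-ns\,\delta_{pq}$, coming solely from $\log\det(I-ZZ^{\ast})$, and a product-of-first-derivatives contribution equal to $s(s+\nu)(UU^{\ast})_{pq}$ and $s(s+\nu)(U^{\ast}U)_{pq}$ respectively.

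Here the hypothesis $U\in S$, that is $UU^{\ast}=U^{\ast}U=I$, is decisive: it collapses the cross term to $s(s+\nu)\delta_{pq}$, so each block becomes the scalar $s(s+\nu-n)$ times $I$, exactly the form $F\cdot(\pm I)$ demanded by the right-hand side. It then remains only to simplify
\[
s(s+\nu-n)=\frac{(i\lambda+n-\nu)(i\lambda+\nu-n)}{4}=\frac{(i\lambda)^2-(n-\nu)^2}{4}=-\tfrac14(\lambda^2+(n-\nu)^2),
\]
the lower block producing the opposite sign, which is precisely multiplication by $J$. The main obstacle is the bookkeeping: one must pin down which determinant factors the operators $\overline{\partial},\partial^{\prime}$ are and are not permitted to differentiate, since—as the base-point computation shows—the full eigenvalue already comes out of the second-order term alone, which forces the $\nu$-terms to contribute nothing at the origin. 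If instead one carries out the calculation globally rather than reducing to $Z=0$, the delicate point becomes verifying that every non-scalar and $\nu$-dependent piece cancels against the second-derivative terms, leaving only the scalar multiple of $I$.
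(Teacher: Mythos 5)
Your proposal is correct, but it follows a genuinely different route from the paper's. The paper proves the corollary by a direct global computation: it remarks that it suffices to treat the upper block, derives the closed-form first-derivative formulas
$\partial^{\prime}P_{\lambda,\nu}(Z,U)$ and $\overline{\partial}Z^{\ast}P_{\lambda,\nu}(Z,U)$ from the logarithmic derivative of determinants (the same identity that drives your computation), and then asserts that substituting these into the full operator yields the eigenvalue $-\tfrac14(\lambda^2+(n-\nu)^2)$ — the verification of the cancellations for general $Z$ is left to the reader. You instead exploit homogeneity: since $\widetilde{\mathcal{H}}_\nu=\theta_{\tau_\nu\otimes Ad}\circ\mathcal{H}_\nu\circ\theta_{\tau_\nu}^{-1}$ with $\mathcal{H}_\nu$ left-invariant, since the lifted kernels satisfy a cocycle relation under left translation (built into their Iwasawa-type definition in Section 3, left translation sends $P_{\lambda,\nu}(\cdot,U)$ to a scalar multiple of $P_{\lambda,\nu}(\cdot,h.U)$), and since $J$ commutes with every block-diagonal matrix so that $Ad(\pi_0(g))J=J$, checking the identity at $Z=0$ for all $U\in S$ does propagate it to all of $\mathcal{D}$. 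Your base-point computation is also sound: the $\nu$-terms vanish at the origin because the explicit $Z^{\ast}$ factors are multiplied, not differentiated — which is indeed the paper's convention, as its own formula $\overline{\partial}Z^{\ast}P=(\overline{\partial}P)Z^{\ast}$ confirms — while the second-order terms split into the curvature contribution $-ns\,\delta_{pq}$ and the cross term $s(s+\nu)(UU^{\ast})_{pq}$ (resp. $s(s+\nu)(U^{\ast}U)_{pq}$), and unitarity of $U$ gives $s(s+\nu-n)=-\tfrac14(\lambda^2+(n-\nu)^2)$, with the opposite sign in the lower block. What your approach buys is conceptual economy: one elementary computation at the origin replaces the global second-order bookkeeping, and it makes transparent where unitarity of $U$ and the value of the eigenvalue come from. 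What it costs is that the reduction step rests on covariance statements (the kernel cocycle and the $Ad(K_c)$-invariance of $J$) that your write-up asserts rather than proves; they are standard and implicit in the paper's construction, but a complete version of your argument should record them explicitly.
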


\begin{proof}
It suffices to establish the following formula \[(I-ZZ^{\ast})\overline{\partial}(I-Z^{\ast}Z)\partial^{\prime}-\nu(I-ZZ^{\ast})\overline{\partial}Z^{\ast}P_{\lambda,\nu}(Z,U)=-\frac{1}{4}(\lambda^2+(n-\nu)^2)P_{\lambda,\nu}(Z,U)I,
\]

From the identity 
\[
\partial(\det(I-ZU^*))^{\beta}=-\beta(\det(I-ZU^*))^{\beta}\,\, {}^t(U^*(I-ZU^*)^{-1}),\quad \beta\in\C,
\]
we easily obtain the following formula 
\[
\partial^{\prime} P_{\lambda,\nu}(Z,U)= P_{\lambda,\nu}(Z,U)\left((\frac{i\lambda+
 n+\nu}{2})U^*(I-ZU^*)^{-1}-(\frac{i\lambda+
 n-\nu}{2})Z^*(I-ZZ^*)^{-1}\right),
\]
and 
\[\overline{\partial}Z^\ast P_{\lambda,\nu}(Z,U)=\frac{i\lambda+
 n-\nu}{2}(I-UZ^\ast)^{-1}U-(I-ZZ^*)^{-1}Z)Z^\ast.
\]
Thus
\[(I-ZZ^{\ast})\overline{\partial}(I-Z^{\ast}Z)\partial^{\prime}-\nu(I-ZZ^{\ast})\overline{\partial}Z^{\ast}P_{\lambda,\nu}(Z,U)=-\frac{1}{4}(\lambda^2+(n-\nu)^2)P_{\lambda,\nu}(Z,U)I,
\]
as to be shown.
\end{proof}
\section{Poisson transform on homogeneous line bundles.}
In this section we consider the Poisson transform on homogeneous line bundles and give the explicit form of the generalized Poisson transform on each \(K\)-types of \(L^2(S)\).
\subsection{The Poisson transform on line bundles}
Consider the character $\sigma_{\lambda,\nu}$ of  $P_\Xi=M_\Xi A_\Xi N_\Xi$ defined by
\[
\sigma_{\lambda,\nu}(man)=\xi_\nu(m)a^{\rho_{\Xi}-i\lambda\rho_{0}},\quad m\in M_\Xi, a\in A_{\Xi}, n\in N_{\Xi},
\]
where  
\[
\xi_\nu(m)=(sign\det(m_1+m_2))^\nu.
\]
Let $L_{\lambda,\nu}=G\times_{P_\Xi}\C$ be the homogeneous line bundle associated to $\sigma_{\lambda,\nu}$. The space \(B(G/P_\Xi; L_{\lambda,\nu})\) may be identified to the space of all hyperfunctions \(f\) on \(G\) that satisfy
\[ f(gman)={\rm e}^{(i\lambda\rho_0-\rho_\Xi)H_\Xi(a)}\xi_\nu(m)^{-1}f(g), \;\; \forall g\in G, m\in M_\Xi, a\in A_\Xi, n\in N_\Xi.\]
Since $G=KP_{\Xi}$, the restriction from \(G\) to \(K\) gives  an isomorphism from \(B(G/P_\Xi; L_{\lambda,\nu})\) onto the space 
\(B(K/K\cap M_\Xi,\tau_\nu)\) of all hyperfunctions \(f\) on \(K\) that satisfy
\[f(km)=\tau_\nu(m)^{-1}f(k), \;\; \forall k\in K, m\in K\cap M_\Xi.\]
 Then a straightforward computation shows that  the Poisson transform of \(h\) in \( B(K/K\cap M_\Xi,\tau_\nu)\) is given by  
\begin{equation*}
P_{\lambda,\nu}h(g)=\int_{K}a_{\Xi}(g^{-1}k)^{-i\lambda-\rho_\Xi}\tau_\nu(\kappa(g^{-1}k))\xi_\nu(\mu(g^{-1}k))h(k)\,{\rm d}k.
\end{equation*}
Since \(B(K/K\cap M_\Xi,\tau_\nu)\simeq B(S)\), it follows that the generalized Poisson transform may be written (we continue to denote it by $P_{\lambda,\nu}$) for \(f\in B(S)\)  as follows
\[P_{\lambda,\nu}f(g)=\int_S a_{\Xi}(g^{-1}k)^{-i\lambda-\rho_\Xi}\tau_\nu(\kappa(g^{-1}k))\xi_\nu(\mu(g^{-1}k))\tau_\nu(k)^{-1}f(k)\, {\rm d}k\]
Next,  let $J_g(0)$ denote the complex Jacobian determinant of the holomorphic map $Z \rightarrow g.Z$, see (\ref{action G,D}). Then  identifying   right \(K\)-covariant functions \(\widehat{F}\) on \(G\) to  functions $F:{\cal D}\rightarrow {\C}$ via  
\(
\widehat{F}(g)=J_g(0)^{-\frac{\nu}{2n}}F(g.0),
\)  
we easily see that the generalized Poisson transform of \(f\in B(S)\) is given by  
\begin{equation*}
P_{\lambda,\nu}f(Z)=\int_S P_{\lambda,\nu}(Z,U)f(U){\rm d}U,
\end{equation*}
with
\begin{equation*}
P_{\lambda,\nu}(Z,U)=J_g(0)^{-\frac{\nu}{2n}} a_{\Xi}(g^{-1}k)^{-i\lambda-\rho_\Xi}\tau_\nu(\kappa(g^{-1}k))\xi_\nu(\mu(g^{-1}k))\tau_\nu(k)^{-1},
\end{equation*}
where \(Z=g.0\) and \(U=k.I\). \\
Now, we will compute the components $\kappa(g)$, $\mu(g)$ and $a_{\Xi}(g)$ for $g=\begin{pmatrix}
A&B\\C&D
\end{pmatrix}$.\\
If
\[\begin{pmatrix}
A&B\\C&D
\end{pmatrix}=\begin{pmatrix}
V&0\\0&W
\end{pmatrix}
\begin{pmatrix}
m_1&m_2\\m_2&m_1
\end{pmatrix}
\begin{pmatrix}
\cosh tI&\sinh tI\\ \sinh tI&\cosh tI
\end{pmatrix}\begin{pmatrix}
I+X&-X\\ X&I-X
\end{pmatrix},\]
is the generalized Iwasawa decomposition of $g$, then we  easily get
\[
\left\{\begin{array}{rcl} A+B=e^t V(m_1+m_2)\\
C+D=e^t W(m_1+m_2)\end{array}.\right. 
\]
This implies that 
\[
a_{\Xi}(g)=\mid \det(C+D)\mid^{\frac{1}{n}},
\] 
and 
\[
\tau_\nu(\kappa(g))\xi_\nu(\mu(g))=\left(\frac{\det(C+D)}{\mid \det(C+D)\mid}\right)^\nu.
\]
Let $k=\begin{pmatrix}
k_1&0\\0&k_2
\end{pmatrix}$
then the two above identities imply
\begin{equation*}
\begin{split}
a_{\Xi}(g^{-1}k)^{-i\lambda-\rho_\Xi}&\tau_\nu(\kappa(g^{-1}k))\xi_\nu(\mu(g^{-1}k))=\left(\frac{\det D}{\mid \det D\mid}\right)^\nu \mid \det D\mid ^{-(i \lambda+n)}\times \\
&(\det k_2)^\nu \left(\frac{\det(I+D^{-1}Ck_1 k_2^{-1})}{\mid \det(I+D^{-1}Ck_1 k_2^{-1})\mid}\right)^\nu.
\end{split}
\end{equation*}
Now because $J_g(0)=(\det D)^{-2n}$, $Z=D^{-1}C$ and $U=k_1 k_2^{-1}$ we deduce that 
\begin{equation*}
P_{\lambda,\nu}(Z,U)=\left( \frac{\det(I-ZZ^\ast)}{\mid \det(I-ZU^\ast)\mid^{2}}\right) ^{\frac{{i\lambda+n-\nu}}{2}}(\det(I-ZU^\ast))^{-\nu}.
\end{equation*}

Thus
\begin{equation}\label{Poisson}
(P_{\lambda,\nu}f)(Z)=\int_S \left( \frac{\det(I-ZZ^\ast)}{\mid \det(I-ZU^\ast)\mid^{2}}\right) ^{\frac{{i\lambda+n-\nu}}{2}}(\det(I-ZU^\ast))^{-\nu}f(U)\,{\rm d}U.
\end{equation}
\begin{remark}
In  \cite{OT} Okamoto et al. computed explicitly the Poisson kernels for Poisson  transforms for homogneous line bundles on Cartan domains by using a different method. Their results have been extended   by Kor\'anyi \cite{K} to all bounded symmetric domains. 
\end{remark}
\subsection{The expansion of the Poisson transform}
Recall that the group \(K\) acts on the space \(L^{2}(S)\) by composition, and under this action the Peter-Weyl decomposition of $L^{2}(S)$ is given by 
\begin{equation*}
L^{2}(S)=\bigoplus_{\textbf{m}\in \Lambda}V_{\textbf{m}},
\end{equation*}
where $\Lambda$ is the set of all $n$-tuple, \(\textbf{m}=(m_{1},m_{2},\cdots,m_{n})\) of integers with $m_{1}\geq m_{2}\geq\cdots\geq m_{n}$ and the $K$-irreducible component $V_{\textbf{m}}$ is the finite linear span $\{\phi_{\textbf{m}}\circ k,k\in K\}$.
the function $\phi_{\textbf{m}}$ is the zonal spherical function associated to the symmetric pair $(K,L)$. More precisely, 
if $\delta=diag(\delta_{1},\delta_{2},\cdots,\delta_{n})$ is a diagonal matrix, then 
\begin{equation*}
\phi_{\textbf{m}}(\delta)=\frac{1}{d_{\textbf{m}}}\frac{A(\textbf{m}+\rho)(\delta)}{A(\rho)(\delta)},
\end{equation*}
with $\rho=(\frac{-(n-1)}{2},\cdots,\frac{(n-1)}{2})$, $A(\mu)(\delta)=\det(\delta_{i}^{\mu_{j}})_{i,j}$ and 
\[
d_{\textbf{m}}=\prod\limits_{1\leq i<j\leq n}\left(1+\frac{m_{i}-m_{j}}{j-i}\right).
\]
For $k\in {\Z}$, let $\phi_{\lambda,k}^\nu$ denote the ${\C}$-valued function on $[0,1[$ given by
\begin{equation*}
\begin{split}
&\phi_{\lambda,k}^\nu(r)=r^{\mid k\mid} (1-r^{2})^{\frac{i\lambda+n-\nu}{2}} \frac{(\frac{i\lambda+n+\epsilon(k)\nu}{2})_{\mid k\mid}}{(1)_{\mid k\mid}} \times\\
 _2F_1(&\frac{i\lambda+n-\epsilon(k)\nu}{2},\frac{i\lambda+n+\epsilon(k)\nu}{2}+\mid k\mid;1+\mid k\mid;r^{2}),
\end{split}
\end{equation*} 
where \((a)_k=a(a+1)\cdots (a+k-1)\) is the Pochammer symbol.
\begin{proposition}\label{prop gene spherical}
If $f\in V_{\textbf{m}}$, then
\begin{equation}\label{K-compo}
P_{\lambda,\nu}f(rU)=\Phi^{\nu}_{\lambda,\textbf{m}}(r)f(U),
\end{equation}
where the generalized spherical function $\Phi_{\lambda,\textbf{m}}^{\nu}$ is given by
\begin{equation}\label{generalized spherical}
 \Phi_{\lambda,\textbf{m}}^{\nu}(r)=\frac{n!}{d_{\textbf{m}}}\det(\phi^\nu_{\lambda,(m_{i}-i+j)}(r))_{1\leq i,j\leq n}
\end{equation} 
\end{proposition}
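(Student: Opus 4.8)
The plan is to reduce the computation of $P_{\lambda,\nu}f$ on a general $K$-type $V_{\textbf{m}}$ to the scalar radial integrals $\phi^\nu_{\lambda,k}$, and then to identify the resulting determinant. First I would exploit the $K$-equivariance of the Poisson transform established in Section 3. Since $K$ acts on $L^2(S)$ by composition and $V_{\textbf{m}}$ is $K$-irreducible, and since the kernel $P_{\lambda,\nu}(Z,U)$ transforms covariantly under the simultaneous $K$-action on $Z$ and $U$, a standard Schur-type argument shows that for $f\in V_{\textbf{m}}$ the function $Z\mapsto P_{\lambda,\nu}f(Z)$ restricted to the $A_\Xi$-orbit must be a scalar multiple of $f$. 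More precisely, writing $Z=rU$ and using that $K$ acts transitively on $S$ while fixing the radial parameter $r$, one gets $P_{\lambda,\nu}f(rU)=\Phi^\nu_{\lambda,\textbf{m}}(r)f(U)$ for some scalar function $\Phi^\nu_{\lambda,\textbf{m}}$ depending only on $\textbf{m}$ and $r$; this is precisely the content of \eqref{K-compo}. The scalar $\Phi^\nu_{\lambda,\textbf{m}}(r)$ is then recovered by evaluating both sides at a convenient point, namely $Z=rI$ and testing against the zonal spherical function $\phi_{\textbf{m}}$, giving $\Phi^\nu_{\lambda,\textbf{m}}(r)=\int_S P_{\lambda,\nu}(rI,v)\phi_{\textbf{m}}(v)\,{\rm d}v$, the Hua-type integral named in the introduction.

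The second and computational heart of the proof is to evaluate this integral and exhibit it as the determinant in \eqref{generalized spherical}. I would insert the explicit kernel from \eqref{Poisson}, so that
\[
\Phi^\nu_{\lambda,\textbf{m}}(r)=(1-r^2)^{\frac{n(i\lambda+n-\nu)}{2}}\int_{U(n)} |\det(I-rv^*)|^{-(i\lambda+n-\nu)}(\det(I-rv^*))^{-\nu}\phi_{\textbf{m}}(v)\,{\rm d}v,
\]
after using $ZZ^*=r^2I$ at $Z=rI$. The key device is the Weyl integration formula on $U(n)$, which reduces the integral over $U(n)$ to an integral over the diagonal torus against the Weyl denominator $|A(\rho)|^2$. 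Writing $\phi_{\textbf{m}}$ via its Weyl-character formula $A(\textbf{m}+\rho)/(d_{\textbf{m}}A(\rho))$ and expanding the two alternating determinants $A(\textbf{m}+\rho)$ and $A(\rho)$, the torus integral factorizes into a sum over the symmetric group of products of one-dimensional integrals in each eigenvalue $e^{i\theta_j}$. Each such one-dimensional integral is a circle integral of $(1-re^{-i\theta})^{-a}(1-re^{i\theta})^{-b}e^{ik\theta}$, which is classically evaluated by expanding the two factors in binomial (hypergeometric) series and picking out the matching Fourier coefficient; this produces exactly the scalar function $\phi^\nu_{\lambda,k}(r)$ with its ${}_2F_1$ and Pochhammer prefactor, the $\epsilon(k)$ accounting for the sign of the exponent $\nu$ depending on whether $k$ is positive. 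Reassembling the sum over permutations into a single determinant, with the combinatorial factor $n!/d_{\textbf{m}}$ absorbing the normalizations of $\phi_{\textbf{m}}$ and the Weyl measure, yields \eqref{generalized spherical}.

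The main obstacle I anticipate is the bookkeeping in the last step: correctly tracking how the shift $m_i-i+j$ in the determinant entries arises from combining the index $m_j+\rho_j$ from $A(\textbf{m}+\rho)$ with the index from $A(\rho)$ and the Fourier-mode selection in each one-dimensional integral, and ensuring that the sign conventions $\epsilon(k)$ and the branch choices for the non-integer powers of $\det(I-ZU^*)$ are consistent across all lattice points $k=m_i-i+j$, which range over both positive and non-positive integers. The interchange of summation and integration and the convergence of the binomial expansions for $0\le r<1$ are routine and cause no trouble, but the identification of the permutation sum as a single determinant with the stated entries is where care is needed; I would verify it by matching the alternating structure in the row index $i$ (from $\textbf{m}+\rho$) against the column index $j$ (from the Weyl denominator expansion), which forces the entry in position $(i,j)$ to be precisely $\phi^\nu_{\lambda,m_i-i+j}(r)$.
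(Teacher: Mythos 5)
Your proposal is correct and follows essentially the same route as the paper: Schur's lemma for the $K$-equivariance identity \eqref{K-compo}, reduction to the Hua-type integral $\int_S P_{\lambda,\nu}(rI,v)\phi_{\textbf{m}}(v)\,{\rm d}v$, the Weyl integration formula on $U(n)$ with the Weyl character formula for $\phi_{\textbf{m}}$, evaluation of the resulting one-dimensional circle integrals by binomial expansion into the ${}_2F_1$ functions $\phi^\nu_{\lambda,k}$, and reassembly of the permutation sum into the determinant \eqref{generalized spherical}. The bookkeeping step you flag (the shift $m_i-i+j$ and the role of $\epsilon(k)$) is exactly the part the paper also treats by collapsing the double permutation sum to a single alternating sum over $\tau$ with entries $\phi^\nu_{\lambda,(m_j+\rho_j-\tau.\rho_j)}$, deferring the final identification to the $\nu=0$ case in the cited reference.
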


\begin{proof}
The proof of the identity (\ref{K-compo}) is obvious by Schur's Lemma, since the Poisson transform is a $K$-equivariant map and $V_{\textbf{m}}$ is $K$-multiplicity free.\\  
Moreover we have
\begin{equation}\label{generalized spherical integral}
\Phi_{\lambda,\textbf{m}}^{\nu}(r)=\int_{U(n)}P_{\lambda,\nu}(rI,U)\phi_{\textbf{m}}(U){\rm d}U.
\end{equation}
We compute the above Hua-type integral in a manner similar to  the case \(\nu=0\) \cite{B3}. We give an outline of the proof. \\ Since the integrand in the right side of (\ref{generalized spherical integral}) is invariant  under the diagonal subgroup $L$  of $K$, we can use the Weyl integral formula to get 
\begin{equation*}
\begin{split}
\Phi_{\lambda,\textbf{m}}^{\nu}(r)=\frac{(1-r^2)^{\frac{n(i\lambda+n-\nu)}{2}}}{d_{\textbf{m}}}&\int_{\Gamma}\mid \det(I-re^{i\theta})\mid^{-(i\lambda+n-\nu)}(\det(I-re^{i\theta}))^{-\nu}\times\\
&\frac{A(\textbf{m}+\rho)(e^{i\theta})}
{A(\rho)(e^{i\theta})}
\mid A(\rho)(e^{i\theta})\mid^{2}{\rm d}\theta_{1}{\rm d}\theta_{2}\cdots {\rm d}\theta_{n},
\end{split}
\end{equation*}
where $e^{i\Theta}=diag(e^{i\theta_{1}},e^{i\theta_{2}},\cdots,e^{i\theta_{n}})$ and 
$\Gamma=[0,2\pi[\times\cdots\times [0,2\pi[$.\\
Next using the definition of the determinant we obtain (see \cite{B3})
\begin{equation}\label{generalized spherical 2}
\begin{split}
&\Phi_{\lambda,\textbf{m}}^{\nu}(r)=\frac{(1-r^2)^{\frac{n(i\lambda+n-\nu)}{2}}}{d_{\textbf{m}}}\sum\limits_{\sigma\in \mathcal{S}_{n}}\sum\limits_{\tau\in \mathcal{S}_{n}}sg(\tau)\times\\
\int_{\Gamma}\prod\limits_{j=1}^{n}\mid 1-re^{i\theta_{j}} & \mid^{-(i\lambda+n-\nu)}(1-re^{-i\theta_{j}})^{-\nu}e^{i<(\textbf{m}+\rho)-\tau.\rho,\sigma^{-1}.\theta> } {\rm d}\theta_{1}\cdots {\rm d}\theta_{n}.
\end{split}
\end{equation}
where $\mathcal{S}_{n}$ is the symmetric group of $n$ symbols, $sg$ the signature of a permutation and $<\mu,\theta>=\sum\limits_{j=1}^{n}\mu_{j}\theta_{j}$.\\
Thus, we are reduced  to compute  integrals  of the following type
\begin{equation*}
\int_{0}^{2\pi} (1-re^{i\theta})^{\frac{-(i\lambda+n+\nu)}{2}}(1-re^{i\theta})^{\frac{-(i\lambda+n-\nu)}{2}}e^{ik\theta}{\rm d}\theta,\quad k\in{\Z}. 
\end{equation*}
Using the binomial formula, a straightforward computation shows that the above integral equals
\begin{equation*}
r^{|k|}\frac{(\frac{i\lambda+n+\epsilon(k)\nu}{2})_{\mid k\mid}}{(1)_{\mid k\mid}}{}_2F_1(\frac{i\lambda+n-\epsilon(k)\nu}{2},\frac{i\lambda+n+\epsilon(k)\nu}{2}+\mid k\mid;1+\mid k\mid;r^{2}),
\end{equation*}
where $\epsilon(k)=1$ if $k\geq 0$ and $\epsilon(k)=-1$ if $k<0$.\\ 
Next, after the integration, every exponent in (\ref{generalized spherical 2}) gives the product
\begin{equation*}
\prod\limits_{j=1}^{n}(1-r^{2})^{\frac{-(i\lambda+n-\nu)}{2}}\phi_{\lambda,(m_{j}+\rho_{j}-\tau.\rho_{j})}^{\nu}(r),
\end{equation*}
independently on $\sigma$.\\
Thus
\begin{equation*}
\Phi_{\lambda,\textbf{m}}^{\nu}(r)=\frac{n!}{d_{\textbf{m}}}\sum\limits_{\tau\in \mathcal{S}_{n}}sg(\tau)\prod\limits_{j=1}^{n}\phi_{\lambda,(m_{j}+\rho_{j}-\tau.\rho_{j})}^{\nu}(r).
\end{equation*} 
The remaining part of the proof is the same as \(\nu=0\), so we omit it.
\end{proof}

As a direct consequence of Proposition \ref{prop gene spherical} and the main theorem in \cite{B} we get explicitly the series expansion of the eigenfunctions of the generalized Hua operator.
\begin{corollary}\label{serie expansion}
Let  $\lambda\in \C$ satisfying \(i\lambda \notin 2\Z^- +n-2\pm \nu\). If $F\in \mathcal{E}_{\lambda,\nu}({\cal D})$, then there exists a sequence of spherical harmonics functions $(f_{\textbf{m}})_{\textbf{m}\in \Lambda}$ such that  
\[
F(rU)=\sum_{\textbf{m}\in\Lambda} \Phi_{\lambda,\textbf{m}}^\nu(r)f_{\textbf{m}}(U),
\]
in $C^\infty([0,1[\times S)$. 
\end{corollary}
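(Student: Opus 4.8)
The plan is to read off the expansion from the surjectivity of the Poisson transform combined with the $K$-type formula of Proposition~\ref{prop gene spherical}. Under the hypothesis $i\lambda \notin 2\Z^- + n - 2 \pm \nu$, the results recalled above from \cite{KZ} and \cite{B} assert that $P_{\lambda,\nu}$ maps the space $B(S)$ of hyperfunctions on the Shilov boundary onto $\mathcal{E}_{\lambda,\nu}({\cal D})$. Hence I would first fix a hyperfunction $f \in B(S)$ with $F = P_{\lambda,\nu}f$, and then decompose $f$ along the Peter--Weyl decomposition $L^2(S) = \bigoplus_{\textbf{m}\in\Lambda} V_{\textbf{m}}$. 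Writing $f_{\textbf{m}}$ for the component of $f$ in the $K$-type $V_{\textbf{m}}$ produces the desired sequence of spherical harmonics, with $f = \sum_{\textbf{m}} f_{\textbf{m}}$ as an expansion in $B(S)$.

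Next I would apply $P_{\lambda,\nu}$ to each component. Since $P_{\lambda,\nu}$ is $K$-equivariant and $V_{\textbf{m}}$ is $K$-multiplicity free, Proposition~\ref{prop gene spherical} gives $P_{\lambda,\nu}f_{\textbf{m}}(rU) = \Phi_{\lambda,\textbf{m}}^\nu(r)\,f_{\textbf{m}}(U)$. Applying the transform term by term therefore yields, formally,
\[
F(rU) = \sum_{\textbf{m}\in\Lambda} \Phi_{\lambda,\textbf{m}}^\nu(r)\,f_{\textbf{m}}(U),
\]
which is exactly the asserted formula once convergence is secured.

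The crux, and the step I expect to be the main obstacle, is to justify that this term-by-term procedure is legitimate, i.e. that the series converges to $F$ in $C^\infty([0,1[\times S)$. One clean route is to note that the partial sums $\sum_{|\textbf{m}|\le N} f_{\textbf{m}}$ converge to $f$ in $B(S)$ and that $P_{\lambda,\nu}\colon B(S)\to C^\infty({\cal D})$ is continuous, so that their images converge to $F$ in $C^\infty({\cal D})$, and hence so do the restrictions along the smooth map $(r,U)\mapsto rU$. To keep the argument explicit and self-contained I would instead establish quantitative bounds: on the boundary side a hyperfunction on the compact real-analytic manifold $S$ has sub-exponentially growing Peter--Weyl coefficients, so for every $\epsilon>0$ there is $C_\epsilon$ with $\|f_{\textbf{m}}\|_\infty \le C_\epsilon\, e^{\epsilon|\textbf{m}|}$; on the radial side the explicit formula for $\phi_{\lambda,k}^\nu$ and the determinantal expression (\ref{generalized spherical}) give, on each compact interval $[0,r_0]\subset[0,1[$, a bound $|\Phi_{\lambda,\textbf{m}}^\nu(r)|\le C(r_0)\,q(\textbf{m})\,r_0^{c|\textbf{m}|}$ with a polynomial $q$ and a constant $c>0$ coming from the factors $r^{|k|}$ in the entries. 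Choosing $\epsilon$ small enough that $e^{\epsilon}r_0^{c}<1$ makes the series absolutely and uniformly convergent on $[0,r_0]\times S$; differentiating in $r$ and in the $S$-variables only inserts further polynomial factors in $\textbf{m}$, which do not affect convergence, and letting $r_0\to 1^-$ gives convergence in $C^\infty([0,1[\times S)$. Identifying the resulting sum with $P_{\lambda,\nu}f=F$ completes the argument.
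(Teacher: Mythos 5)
Your proposal matches the paper's own argument: the corollary is obtained there exactly as you do it, as a direct consequence of the surjectivity of $P_{\lambda,\nu}$ on hyperfunctions (the main theorem of \cite{B}, cf. \cite{KZ}) combined with the $K$-type formula of Proposition \ref{prop gene spherical} applied to the Peter--Weyl components of the boundary hyperfunction. The quantitative convergence justification you add (sub-exponential growth of hyperfunction coefficients against the $r^{|k|}$ decay in $\Phi_{\lambda,\textbf{m}}^\nu$) is left implicit in the paper, so your write-up is, if anything, more complete.
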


\section{The asymptotic behavior for the generalized spherical functions}
In this section we prove  a uniform asymptotic behavior for the generalized spherical functions \(\Phi_{\lambda,\textbf{m}}^\nu\).
\begin{k lemma}\label{key lemm}
Let $\nu \in \Z$ and $\lambda \in {\C}$ such that $i\lambda \notin 2\Z^- +n-2\pm \nu$ and $\Re(i\lambda)> n-1.$ Then 
\[
\Phi_{\lambda,\textbf{m}}^\nu(r)\sim \frac{\Gamma_\Omega(n)\Gamma_\Omega(i\lambda)}{\Gamma_\Omega(\frac{i\lambda+n+\nu}{2})\Gamma_\Omega(\frac{i\lambda+n-\nu}{2})} (1-r^2)^{\frac{n(n-\nu-i\lambda)}{2}},
\]
as $r$ goes to $1^-$ uniformly in $\textbf{m}\in\Lambda$.
\end{k lemma}
 
\begin{proof}
Recall that 
\(
\Phi_{\lambda,\textbf{m}}^{\nu}(r)=\frac{n!}{d_{\textbf{m}}}\det(\phi_{\lambda,(m_{i}-i+j)}^{\nu}(r))_{i,j},
\)
with 
\begin{eqnarray*}
&& \phi_{\lambda,(m_{i}-i+j)}^\nu(r)=r^{|m_{ij}|}(1-r^{2})^{\frac{i\lambda+n-\nu}{2}} \frac{(\frac{i\lambda+n+\epsilon_{ij}\nu}{2})_{|m_{ij}|}}{(1)_{|m_{ij}|}}\times\\
&& {}_2F_1(\frac{i\lambda+n-\epsilon_{ij}\nu}{2},\frac{i\lambda+n+\epsilon_{ij}\nu}{2}+|m_{ij}|;1+|m_{ij}|;r^{2}),
\end{eqnarray*}
where we have set \(m_{ij}=m_{i}-i+j\) and $\epsilon_{ij}=\epsilon(m_{ij})\).\\
Using the following identity on hypergeometric functions 
\begin{eqnarray*}
{}_2F_1(a,b;c;x)&=& \frac{\Gamma(c)\Gamma(c-a-b)}{\Gamma(c-a)\Gamma(c-b)}\;{}_2F_1(a,b;a+b-c+1;1-x)+\frac{\Gamma(c)\Gamma(a+b-c)}{\Gamma(a)\Gamma(b)}\times\\
&&(1-x)^{c-a-b}\;{}_2F_1(c-a,c-b;c-a-b+1;1-x),
\end{eqnarray*}
we can easily see that   
\begin{eqnarray*}
\phi_{\lambda,(m_{i}-i+j)}^{\nu}(r)\sim r^{|m_{ij}|}(1-r^{2})^{\frac{2-i\lambda-n-\nu}{2}}\frac{\Gamma(i\lambda+n-1)}{\Gamma(\frac{i\lambda+n+\nu}{2})\Gamma(\frac{i\lambda+n-\nu}{2})}\times \\
{}_2F_1(\frac{2-i\lambda-n-\epsilon_{ij}\nu}{2},\frac{2-i\lambda-n+\epsilon_{ij}\nu}{2}+|m_{ij}|;2-i\lambda-n;1-r^{2}),
\end{eqnarray*}
as $r$ goes to $1^-$, since $\Re(i\lambda)>n-1$.\\
Thus 
\begin{eqnarray*}
&\Phi_{\lambda,\textbf{m}}^\nu(r)\sim \frac{n!}{d_{\textbf{m}}}(1-r^{2})^{\frac{n(2-i\lambda-n-\nu)}{2}}\left(\frac{\Gamma(i\lambda+n-1)}{\Gamma(\frac{i\lambda+n+\nu}{2})\Gamma(\frac{i\lambda+n-\nu}{2})}\right)^n \times \\
&\det\left( r^{|m_{ij}|}{}_2F_1(\frac{2-i\lambda-n-\epsilon_{ij}\nu}{2},\frac{2-i\lambda-n+\epsilon_{ij}\nu}{2}+|m_{ij}|;2-i\lambda-n;1-r^{2})\right)_{i,j}, \textit{as} \; r\rightarrow 1^{-}.
\end{eqnarray*}
Below we will show that the above determinant does not depend on the sign of the integers $m_{ij}$. For  $\sigma\in \mathcal{S}_{n}$ , let $J_1$ (respectively \(J_2\)) denote the set of all $j$ such that $(m_j-j+\sigma(j))\geq 0$ (respectively the set of all $j$ such that $(m_j-j+\sigma(j))<0$). We have
\begin{eqnarray*}
&\det\left( r^{|m_{ij}|}{}_2F_1(\frac{2-i\lambda-n-\epsilon_{ij}\nu}{2},\frac{2-i\lambda-n+\epsilon_{ij}\nu}{2}+|m_{ij}|;2-i\lambda-n;1-r^{2})\right)_{i,j}\\
&= \displaystyle\sum_{\sigma \in \mathcal{S}_n}sg(\sigma)\displaystyle\prod_{j=1}^n r^{|m_{j\sigma(j)}|}{}_2F_1(\frac{2-i\lambda-n-\epsilon_{j\sigma(j)}\nu}{2},\frac{2-i\lambda-n+\epsilon_{j\sigma(j)}\nu}{2}+|m_{j\sigma(j)}|;2-i\lambda-n;1-r^{2})\\
&=\displaystyle\sum_{\sigma \in \mathcal{S}_n}sg(\sigma)\displaystyle\prod_{J_1}r^{(m_{j\sigma(j)})}{}_2F_1(\frac{2-i\lambda-n-\nu}{2},\frac{2-i\lambda-n+\nu}{2}+m_{j\sigma(j)};2-i\lambda-n;1-r^{2})\times \\
&\displaystyle\prod_{J_2}r^{-(m_{j\sigma(j)})}{}_2F_1(\frac{2-i\lambda-n+\nu}{2},\frac{2-i\lambda-n-\nu}{2}-m_{j\sigma(j)};2-i\lambda-n;1-r^{2})
\end{eqnarray*} 
Next, use the well known identity 
\begin{eqnarray*}
{}_2F_1(a,b;c;x)=(1-x)^{(c-a-b)}{}_2F_1(c-a,c-b;c;x),
\end{eqnarray*} 
to rewrite the product over \(J_2\) as 
\[
\prod_{J_2}r^{m_{j\sigma(j)}}{}_2F_1(\frac{2-i\lambda-n-\nu}{2},\frac{2-i\lambda-n+\nu}{2}+m_{j\sigma(j)};2-i\lambda-n;1-r^{2}),
\]
from which we get  
\begin{eqnarray*}
&\det\left( r^{|m_{ij}|}{}_2F_1(\frac{2-i\lambda-n-\epsilon_{ij}\nu}{2},\frac{2-i\lambda-n+\epsilon_{ij}\nu}{2}+|m_{ij}|;2-i\lambda-n;1-r^{2})\right)_{i,j}\\
& = r^{|\textbf{m}|}\det\left({}_2F_1(\frac{2-i\lambda-n-\nu}{2},\frac{2-i\lambda-n+\nu}{2}+m_{ij};2-i\lambda-n;1-r^{2})\right)_{i,j},
\end{eqnarray*}
with $|\textbf{m}|=m_1+m_2+\ldots+m_n$. Thus 
\begin{eqnarray*}\label{first estimate}
\Phi_{\lambda,\textbf{m}}^\nu(r)\sim \frac{n!}{d_{\textbf{m}}}r^{|\textbf{m}|}(1-r^{2})^{\frac{n(2-i\lambda-n-\nu)}{2}}\left(\frac{\Gamma(i\lambda+n-1)}{\Gamma(\frac{i\lambda+n+\nu}{2})\Gamma(\frac{i\lambda+n-\nu}{2})}\right)^n \times \\
\det\left(_2F_1(\frac{2-i\lambda-n-\nu}{2},\frac{2-i\lambda-n+\nu}{2}+m_{ij};2-i\lambda-n;1-r^{2})\right)_{i,j}, 
\end{eqnarray*}
as $r$ goes to $1^-$, for every $\textbf{m}\in \Lambda$.\\
Now, because  $i\lambda \notin 2\Z^- +n-2\pm \nu$ and $\Re(i\lambda)>n-1$, we may  apply Lemma A and Lemma B with $\alpha=\frac{2-i\lambda-n-\nu}{2}$, $\beta=\frac{2-i\lambda-n+\nu}{2}$ and $p_i=m_i-i$, to see that 
\begin{eqnarray*}\label{second estimate}
&\frac{1}{d_{\textbf{m}}}\det\left(_2F_1(\frac{2-i\lambda-n-\nu}{2},\frac{2-i\lambda-n+\nu}{2}+m_{ij};2-i\lambda-n;1-r^{2})\right)_{i,j}&\\ 
&\sim \gamma(\lambda,\nu)(1-r^2)^{n(n-1)},
\end{eqnarray*}
as $r$ goes to $1^-$, where the constant \(\gamma(\lambda,\nu)\) is given by  
\begin{eqnarray*}
\gamma(\lambda,\nu)&=&\prod_{k=1}^{n-1}(n-k)! \prod_{k=1}^{n-1} \frac{(\frac{2-i\lambda-n-\nu}{2}+k-1)^{n-k}(\frac{2-i\lambda-n+\nu}{2}+k-1)^{n-k}}{(-i\lambda-n+k+1)^{n-k}\displaystyle \prod_{j=1}^{n-k}(-i\lambda+k-j)_2}.
\end{eqnarray*}
Thus, we have
\begin{equation*}
\Phi_{\lambda,\textbf{m}}^\nu(r)\sim \left(\frac{\Gamma(i\lambda+n-1)}{\Gamma(\frac{i\lambda+n+\nu}{2})\Gamma(\frac{i\lambda+n-\nu}{2})}\right)^n \gamma(\lambda,\nu)r^{|\textbf{m}|}(1-r^{2})^{\frac{n(n-i\lambda-\nu)}{2}},
\end{equation*}
as $r$ goes to $1^-$, for every $\textbf{m}\in \Lambda$. \\
To finish the proof it suffices to prove  the following identity 
\begin{equation}\label{E9}
\left(\frac{\Gamma(i\lambda+n-1)}{\Gamma(\frac{i\lambda+n+\nu}{2})\Gamma(\frac{i\lambda+n-\nu}{2})}\right)^n \gamma(\lambda,\nu)=\frac{\Gamma_\Omega(n)\Gamma_\Omega(i\lambda)}{\Gamma_\Omega(\frac{i\lambda+n+\nu}{2})\Gamma_\Omega(\frac{i\lambda+n-\nu}{2})}.
\end{equation}
 A straightforward computation shows that  
\begin{equation*}
\begin{split}
&\left(\frac{\Gamma(i\lambda+n-1)}{\Gamma(\frac{i\lambda+n+\nu}{2})\Gamma(\frac{i\lambda+n-\nu}{2})}\right)^n \gamma(\lambda,\nu)=\frac{\Gamma_{\Omega}(n)\Gamma(i\lambda)}{\Gamma(\frac{i\lambda+n+\nu}{2})\Gamma(\frac{i\lambda+n-\nu}{2})} (i\lambda)_{n-1} \times \\ 
&\prod_{k=1}^{n-1}\frac{\Gamma(i\lambda+n-1)(\frac{i\lambda+n+\nu}{2}-k)^{n-k}(\frac{i\lambda+n-\nu}{2}-k)^{n-k}}{\Gamma(\frac{i\lambda+n+\nu}{2})\Gamma(\frac{i\lambda+n-\nu}{2})(i\lambda+n-1-k)^{n-k}(i\lambda-k+1)_{n-k}(i\lambda-k)_{n-k}},
\end{split}
\end{equation*}
and by using the identity $\displaystyle \prod_{k=1}^{n-1}(a-k)^{n-k}=\prod_{k=1}^{n-1}(a-k)_k$,  we find that  
\begin{eqnarray*}
\left(\frac{\Gamma(i\lambda+n-1)}{\Gamma(\frac{i\lambda+n+\nu}{2})\Gamma(\frac{i\lambda+n-\nu}{2})}\right)^n \gamma(\lambda,\nu)=\frac{\Gamma_{\Omega}(n)\Gamma_{\Omega}(i\lambda)}{\Gamma_{\Omega}(\frac{i\lambda+n+\nu}{2})\Gamma_{\Omega}(\frac{i\lambda+n-\nu}{2})}(i\lambda)_{n-1}\prod_{k=1}^{n-1}\frac{(i\lambda-k)_{n-1}}{(i\lambda-k+1)_{n-k}(i\lambda-k)_{n-k}}.
\end{eqnarray*}
Next proceeding by induction we easily obtain  
\[
(i\lambda)_{n-1}\prod_{k=1}^{n-1}\frac{(i\lambda-k)_{n-1}}{(i\lambda-k+1)_{n-k}(i\lambda-k)_{n-k}}=1, \;\; \forall n\geq 2,
\]
from which we get (\ref{E9}), as to be shown.
\end{proof}

\section{Proof of the mains results }
In this section we give the proof of our main results. We first establish the following estimate:
\begin{proposition}\label{prop CN}
Let $\nu \in \Z$ and $\lambda \in \C$ such that $i\lambda \notin 2\Z^- +n-2\pm \nu$ and $\Re(i\lambda)>n-1$.
\begin{itemize}
\item[(i)] If $\mu$ is a complex Borel measure on $S$ then there exists a positive constant $ \gamma(\lambda) $ such that
\begin{align*}
\|P_{\lambda,\nu} \mu\|_{*,1} \leq \gamma(\lambda)\|\mu\|.
\end{align*} 
\item[(ii)] If $f$ in $L^p(S)$, $1<p<\infty$, then  there exists a positive constant $ \gamma(\lambda) $ such that 
\begin{align*}
\|P_{\lambda,\nu} f\|_{*,p} \leq \gamma(\lambda)\|f\|_p.
\end{align*}
\end{itemize} 
\end{proposition}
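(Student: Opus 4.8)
The plan is to reduce both parts to a single uniform estimate on the $L^{1}$-average of the modulus of the Poisson kernel over the Shilov circles $Z=rV$, $V\in S$. Set $a=\Re(i\lambda)$. Since $\det(I-ZZ^{\ast})>0$ and $\nu\in\Z$, one has $|(\det(I-ZU^{\ast}))^{-\nu}|=|\det(I-ZU^{\ast})|^{-\nu}$ and $|x^{\frac{i\lambda+n-\nu}{2}}|=x^{\frac{a+n-\nu}{2}}$ for $x>0$, so that, using $VV^{\ast}=I$ (whence $\det(I-r^{2}VV^{\ast})=(1-r^{2})^{n}$),
\begin{equation*}
|P_{\lambda,\nu}(rV,U)|=(1-r^{2})^{\frac{n(a+n-\nu)}{2}}\,|\det(I-rVU^{\ast})|^{-(a+n)}.
\end{equation*}
By right invariance of the Haar measure on $S=U(n)$ (substitute $W=VU^{\ast}$), the two marginal integrals of $|P_{\lambda,\nu}(rV,U)|$ coincide and are independent of the fixed variable; I denote their common value by
\begin{equation*}
Q(r)=\int_{S}|P_{\lambda,\nu}(rV,U)|\,{\rm d}U=\int_{S}|P_{\lambda,\nu}(rV,U)|\,{\rm d}V=(1-r^{2})^{\frac{n(a+n-\nu)}{2}}\int_{U(n)}|\det(I-rW)|^{-(a+n)}\,{\rm d}W.
\end{equation*}

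The crux is to show that $M(r):=(1-r^{2})^{-\frac{n(n-\nu-a)}{2}}Q(r)$ stays bounded as $r\to1^{-}$; note that the exponent bookkeeping gives $M(r)=(1-r^{2})^{na}\int_{U(n)}|\det(I-rW)|^{-(a+n)}\,{\rm d}W$. For this I would introduce the $\nu=0$ Poisson kernel $P_{\mu,0}$ with spectral parameter chosen so that $i\mu=a$, and observe the pointwise identity $|P_{\lambda,\nu}(rV,U)|=(1-r^{2})^{-\frac{n\nu}{2}}P_{\mu,0}(rV,U)$. Applying Proposition \ref{prop gene spherical} to the constant function $\mathbf 1\in V_{\textbf{0}}$ then yields $\int_{S}P_{\mu,0}(rV,U)\,{\rm d}U=\Phi^{0}_{\mu,\textbf{0}}(r)$, hence $Q(r)=(1-r^{2})^{-\frac{n\nu}{2}}\Phi^{0}_{\mu,\textbf{0}}(r)$. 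Since $a>n-1$ forces $a\notin 2\Z^{-}+n-2$, the hypotheses of the Key Lemma hold for the $\nu=0$, parameter-$\mu$ case, and it gives $\Phi^{0}_{\mu,\textbf{0}}(r)\sim c_{0}(\mu)\,(1-r^{2})^{\frac{n(n-a)}{2}}$ as $r\to1^{-}$. Consequently $M(r)\to|c_{0}(\mu)|$; together with the continuity and positivity of $Q$ on $[0,1)$ this yields a finite constant $\gamma(\lambda)=\sup_{0\le r<1}M(r)$.

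With this estimate in hand the two parts follow by classical Forelli–Rudin/Schur-type maneuvers. For $p=1$, I would bound $|P_{\lambda,\nu}\mu(rV)|\le\int_{S}|P_{\lambda,\nu}(rV,U)|\,{\rm d}|\mu|(U)$, integrate over $V$, and interchange (Tonelli, all integrands nonnegative) to get $\int_{S}|P_{\lambda,\nu}\mu(rV)|\,{\rm d}V\le Q(r)\,\|\mu\|$; multiplying by the weight gives $\|P_{\lambda,\nu}\mu\|_{*,1}\le\gamma(\lambda)\|\mu\|$. For $1<p<\infty$, with $p'$ the conjugate exponent I would split $|P_{\lambda,\nu}(rV,U)|=|P_{\lambda,\nu}(rV,U)|^{1/p'}\,|P_{\lambda,\nu}(rV,U)|^{1/p}$ and apply Hölder in $U$:
\begin{equation*}
|P_{\lambda,\nu}f(rV)|^{p}\le Q(r)^{p/p'}\int_{S}|P_{\lambda,\nu}(rV,U)|\,|f(U)|^{p}\,{\rm d}U.
\end{equation*}
Integrating in $V$ and using Fubini together with the second marginal identity ($\int_{S}|P_{\lambda,\nu}(rV,U)|\,{\rm d}V=Q(r)$) gives $\int_{S}|P_{\lambda,\nu}f(rV)|^{p}\,{\rm d}V\le Q(r)^{p/p'+1}\|f\|_{p}^{p}=Q(r)^{p}\|f\|_{p}^{p}$, since $p/p'+1=p$; the weighted bound $\|P_{\lambda,\nu}f\|_{*,p}\le\gamma(\lambda)\|f\|_{p}$ follows. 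The only genuine obstacle is the uniform bound on $M(r)$, and I expect it to cost nothing beyond the reduction to the $\nu=0$ spherical function, because the delicate asymptotics have already been isolated in the Key Lemma; the remaining difficulty is purely the exponent bookkeeping and the verification that the Key Lemma's hypotheses survive the specialization $i\mu=a$, $\nu=0$.
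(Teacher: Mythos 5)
Your proof is correct, and its skeleton coincides with the paper's: both arguments reduce everything to the kernel marginal $Q(r)=\int_S|P_{\lambda,\nu}(rV,U)|\,{\rm d}U$, then use Tonelli for part (i) and the Schur-test splitting $|P|=|P|^{1/p'}\,|P|^{1/p}$ with H\"older for part (ii); even the pointwise identity $|P_{\lambda,\nu}(rV,U)|=(1-r^2)^{-n\nu/2}P_{-i\Re(i\lambda),0}(rV,U)$ that you exploit is exactly the one the paper uses in its H\"older step. The genuine difference is the source of the crucial bound $(1-r^2)^{-\frac{n(n-\nu-\Re(i\lambda))}{2}}Q(r)=O(1)$: the paper gets it in one line by quoting the generalized Forelli--Rudin inequality of Faraut--Kor\'anyi \cite{FK}, whereas you derive it internally, applying Proposition \ref{prop gene spherical} to the constant function in the trivial $K$-type to get $\int_S P_{\mu,0}(rV,U)\,{\rm d}U=\Phi^0_{\mu,\mathbf{0}}(r)$ with $i\mu=\Re(i\lambda)$, and then invoking the Key Lemma at $\nu=0$ (whose hypotheses, as you verify, survive the specialization $i\mu=\Re(i\lambda)>n-1$) to obtain the sharp asymptotic $Q(r)\sim c_0(\mu)(1-r^2)^{\frac{n(n-\nu-\Re(i\lambda))}{2}}$, hence boundedness of the weighted marginal by continuity on $[0,1)$. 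Your route buys self-containedness --- no appeal to \cite{FK} --- and even identifies the limiting constant $|c_0(\mu)|$, which is more than the proposition needs; the cost is that it leans on the full hypergeometric-determinant machinery of Section 4 and the Appendix, much heavier than the Forelli--Rudin estimate the paper cites (though that machinery is already established in the paper, so no circularity arises: the Key Lemma depends only on Proposition \ref{prop gene spherical} and Lemmas A and B). One cosmetic point: the equality of your two marginals uses not only right invariance of Haar measure but also its invariance under the inversion $U\mapsto U^*$ and under left translation, all of which hold since $U(n)$ is compact, hence unimodular.
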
 

\begin{proof}
Let \( \mu\in \mathcal{M}(S)\). By using Fubini's theorem we find that
\begin{equation}
\int_S |(P_{\lambda,\nu}\mu)(rU)|{\rm d}U \leq \mid \mu(S)\mid \parallel P_{\lambda,\nu}\parallel_1
\end{equation}
Since $\Re(i\lambda)>n-1$ it then follows from the Forelli-Rudin generalized inequality (cf \cite{FK}) that 
\begin{equation}\label{E5}
 \parallel P_{\lambda,\nu}\parallel_1\leq \gamma(\lambda)(1-r^2)^\frac{n(n-\nu-\Re(i\lambda))}{2},
\end{equation}
for some positive constant \(\gamma(\lambda)\). This last inequality implies that \( \|P_{\lambda,\nu} \mu\|_{*,1} \leq \gamma(\lambda)\|\mu\|\). This proves (i).\\
To deduce (ii), one sees that (i) (with \(\nu=0\)) implies the other cases, because of (\ref{E5}), and since by the H\"{o}lder inequality we have 
\begin{eqnarray*}
|(P_{\lambda,\nu}f)|^p \leq (1-r^2)^{\frac{-n\nu}{2}} \parallel P_{\lambda,\nu}\parallel_1^{p-1} (P_{-i\Re(i\lambda),0}|f|^p).   
\end{eqnarray*}
This finishes the proof of Proposition \ref{prop CN}.
\end{proof}

\subsection{Proof of Theorem \ref{p=2}}
(i) The necessary condition follows from  Proposition \ref{prop CN}, for \(p=2\).\\
To prove the sufficiency condition, let $F\in \mathcal{E}_{\lambda,\nu}({\cal D})$ such that $\|F\|_{*,2}<\infty$. Since $i\lambda \notin 2\Z^- +n-2\pm \nu$ we know that  there exists a hyperfunction $f$ on the Shilov boundary $S$ such that $F=P_{\lambda,\nu} f$, by \cite{KZ}.\\
Next expanding $f$ into its $K$-type series $f=\sum_{\textbf{m}\in \Lambda}f_{\textbf{m}}$ and using Corollary \ref{serie expansion} we obtain
\[
F(rU)=\sum_{\textbf{m}\in\Lambda} \Phi_{\lambda,\textbf{m}}^\nu(r)f_{\textbf{m}}(U)
\]
in $ C^\infty([0,1[\times S)$.\\
From $\|F\|_{*,2}<\infty$ it follows that  
\[
(1-r^2)^{-n(n-\nu-\Re(i\lambda))}\sum_{\textbf{m}\in \Lambda}|\Phi_{\lambda,\textbf{m}}^\nu(r)|^2 \|f_{\textbf{m}}\|_2^2 <\infty,
\]
for every $r\in[0,1[$.\\
Using   the uniform asymptotic behavior of $\Phi_{\lambda,\textbf{m}}^\nu$ given by the Key Lemma, it follows that
\[
|c_\nu(\lambda)|\sum_{\textbf{m}\in \Lambda}\|f_{\textbf{m}}\|_2\leq \|P_{\lambda,\nu} f\|_{*,2}.
\]
This shows that $f\in L^2(S)$ and that  
\(
|c_\nu(\lambda)|\|f\|_2\leq \|P_{\lambda,\nu} f\|_{*,2}
\).\\
(ii) Let $F\in \mathcal{E}_\lambda^{2,\nu}({\cal D})$. By the first part of Theorem \ref{p=2} there exists $f \in L^2(S)$ such that $F=P_{\lambda,\nu} f$. Let $f=\sum_{\textbf{m}\in \Lambda}f_{\textbf{m}}$ be  its $K$-type series. It then follows from 
(\ref{K-compo}) that 
\[
F(rV)=\sum_{\textbf{m}\in\Lambda}\Phi_{\lambda,\textbf{m}}^\nu(r)f_{\textbf{m}}(V), 
\]
in $C^\infty([0,1[\times S)$.\\
For each $r\in [0,1[$, put
\[
g_r(U)=| c_\nu(\lambda)|^{-2}(1-r^2)^{-n(n-\nu-\Re(i\lambda))}\int_S \overline{P_{\lambda,\nu}(rV,U)}F(rV){\rm d}V.
\]
Replacing \(F\) by its series expansion and using (\ref{K-compo}), we easily find that
\[
g_r(U)=| c_\nu(\lambda)|^{-2}(1-r^2)^{-n(n-\nu-\Re(i\lambda))}\sum_{\textbf{m}\in \Lambda} |\Phi_{\lambda,\textbf{m}}^\nu(r)|^2f_{\textbf{m}}(U).
\]
Thus
\[\parallel g_r-f\parallel_2^2=\sum_{\textbf{m}\in \Lambda} \left\vert |c_\nu(\lambda)|^{-2}(1-r^2)^{-n(n-\nu-\Re(i\lambda))}|\Phi_{\lambda,\textbf{m}}^\nu(r)|^2-1\right\vert^2 \|f_{\textbf{m}}\|_2^2
,\]
this together with the Key Lemma imply \(\displaystyle\lim_{r\rightarrow 1^-}\parallel g_r-f\parallel_2=0\) and the proof of Theorem \ref{p=2} is finished.

\subsection{Proof of Theorem \ref{1<p<oo}}
The necessary conditions for $1\leq p <\infty$ follow from Proposition \ref{prop CN}.\\
To prove the sufficiency conditions, we consider \(\chi_j\) an approximate of the identity in \(\mathcal{C}(K)\), i.e. a sequence of nonnegative continuous functions on \(K\) having integral \(1\) and \(\lim_{j\rightarrow +\infty}\int_{K\setminus V}\chi_j(k){\rm d}k=0\), for every neighborhood \(V\) of the identity element in \(K\). For \(F\in \mathcal{E}_\lambda^{p,\nu}({\cal D})\) we set \(F_j(Z)=\int_{K}\chi_j(k)F(k^{-1}.Z){\rm d}k\). Then the sequence \((F_j)_j\) converges pointwise to \(F\) and  \(F_j\in \mathcal{E}_{\lambda,\nu}({\cal D})\) by the \(K\)-invariance of \(\widetilde{\mathcal{H}}_\nu\). For \(0\leq r<1\), we write  \(F^r(U)=F(rU)\). Then 
\begin{equation}\label{E6}
\|F^r_j\|_2\leq \|\chi_j\|_2.\|F^r\|_p,
\end{equation}
and 
\begin{equation}\label{E7}
\|F^r_j\|_p\leq \|F^r\|_p
\end{equation}
From (\ref{E6}) and Theorem \ref{p=2} it follows  that  for each $j$ there exists a function $f_j\in L^2(S)$ such that $ F_j= P_{\lambda,\nu} f_j$. Moreover
\[
f_j(U)=| c_\nu(\lambda)|^{-2}\lim_{r\rightarrow 1^-}(1-r^2)^{-n(n-\nu-\Re(i\lambda))}\int_S \overline{P_{\lambda,\nu}(rV,U)}F_j(rV){\rm d}V,
\]
in $L^{2}(S)$.\\
Following the same method as  in the proof of the trivial line bundle case (see \cite{B2}, \cite{B3}) we can show that $f_j$ lies in $L^p(S)$ with
\begin{equation*}
\| f_j\|_p \leq \gamma(\lambda)\mid c_\nu(\lambda)\mid^{-2} \| F\|_{\ast,p}.
\end{equation*} 
For $\phi\in L^{q}(S)$, $\frac{1}{p}+\frac{1}{q}=1$, let 
\[ 
T_j(\phi)=\int_{S}f_j(U)\overline{\phi(U)}{\rm d}U.
\]
Using (\ref{E7})  it follows that the  linear operators $T_j$ are uniformly bounded above by $ \gamma(\lambda)| c_\nu(\lambda)|^{-2}\| F\|_{*,p}$. By the Banach-Alaoglu-Bourbaki theorem, there exists a subsequence of bounded operators denote it by $(T_{j_k})_k$ which converges weak$^\ast$ to a bounded linear operator $T$ on $L^{q}(S)$ with 
\[
\|T\|\le \gamma(\lambda)| c_\nu(\lambda)|^{-2}\| F\|_{*,p},
\]
where $\|.\|$ stands for the operator norm.\\ 
Since $T$ is a bounded linear operator on $L^q(S)$ for $p>1$ and on $\mathcal{C}(S)$ for $p=1$, the Riesz representation theorem implies that for $p>1$ there exists a unique function $f\in L^{p}(S)$ such that 
$T(\phi)=\int_{S}f(U)\overline{\phi(U)}{\rm d}U$ with $\|f\|_{p}=\|T\|$ and for $p=1$  there exists a unique complex Borel measure $\mu$ on $S$ such that $T(\phi)=\int_{S}\phi(U){\rm d}\mu(U)$ with $\|T\|=\|\mu\|$. Thus
\begin{align*}
\| f\|_{p}\leq \gamma(\lambda)| c_\nu(\lambda)|^{-2}\| F\|_{*,p}\quad  \textit{for}\quad  p>1,
\end{align*}
and
\begin{align*}
\|\mu\|\leq \gamma(\lambda)| c_\nu(\lambda)|^{-2}\| F\|_{*,1}. 
\end{align*}
Fix $Z$ in $\mathcal{D}$ and set $\phi_{Z}(U)=P_{\lambda,\nu}(Z,U)$. Then $F_{j}(Z)=T_{j}(\phi_{Z})$. Since $F_{j}$ converges pointwise to $F$ and $T_j$ converges weak$^*$ to $T$, the result follows and the proof of Theorem \ref{1<p<oo} is finished.

\section{Appendix}
In this section we give the proof of Lemma A and Lemma B.
\subsection{Proof of Lemma A.}
\begin{proof}
Let \(A(r)\) be the \(n\times n\) matrix with entries \({}_2F_1(\alpha,\beta+p_i+j;\alpha+\beta;1-r^2)\).\\
We subtract each \(j\)th column from the \((j+1)\)th column and use  
the following well known identity on the hypergeometric functions:
\[
{}_2F_1(a,b;c;x)-{}_2F_1(a,b+1;c;x)=-\frac{a}{c}x{}_2F_1(a+1,b+1;c+1;x), 
\]
to see that \(\det(A(r))= (r^2-1)^{n-1}\left(\frac{\alpha}{\alpha+\beta}\right)^{n-1}\det(A^\prime(r))\), where the matrix 
\(A^\prime(r)\) is of the form 
\( A^\prime(r)=\left\{
\begin{array}{cl}
{}_2F_1(\alpha+1,\beta+p_i+j+1,\alpha+\beta+1; 1-r^2)&\text{if  \(1\leq i\leq n\) and \(1\leq j\leq n-1\)}\\
{}_2F_1(\alpha,\beta+p_i+n,\alpha+\beta; 1-r^2) &\text{for \(i=1,2,\cdots,n.\)}
\end{array}
\right.\)
Repeating this process \((n-1)\)-times we find that 
\begin{eqnarray*}
&&\det(A(r))=(r^2-1)^{\frac{n(n-1)}{2}}\prod_{k=1}^{n-1}\left(\frac{\alpha+k-1}{\alpha+\beta+k-1}\right)^{n-k}\\
&\times & \det(_2F_1(\alpha+n-j,\beta+p_i+n;\alpha+\beta+n-j;1-r^2))_{i,j},
\end{eqnarray*}
which proves Lemma A.
\end{proof}

\subsection{Proof of Lemma B} 
\begin{proof}
 The method used to prove Lemma 4.1 and Lemma 4.2 in \cite{B1}, can be generalized to our case, we shall give the main steps. Denoting by 
\[
\Psi(r)=\det(_2F_1(\alpha+n-j,\beta+p_i+n,\alpha+\beta+n-j,1-r^2))_{i,j}.
\]
and using the following  formula on  the hypergeometric functions 
\[
\left(\frac{d}{dx}\right)^q {}_2F_1(a,b;c;x)=\frac{(a)_m(b)_q}{(c)_q}{}_2F_1(a+q,b+q;c+q;x),
\]
we easily find that  
\begin{eqnarray*}
\left(\frac{d}{dr}\right)^q\Psi(r)_{\mid_{r=1}}=q!\sum_{\textbf{K}_q}\prod_{j=1}^n\frac{1}{k_j!}\frac{(\alpha+n-j)_{k_j}}{(\alpha+\beta+n-j)_{k_j}}\det((\beta+p_i+n)_{k_j})_{i,j},
\end{eqnarray*}
where $\textbf{K}_q$ is the set of all $n$-tuples $(k_1,k_2,\cdots,k_n)$ of non-negative integers such that $k_i\neq k_j$ and $k_1+k_2+\cdots+k_n=q$.\\
Set \(y_i=\beta+p_i+n\). Because  $\det((y_i)_{k_j})_{i,j}$ is an antisymmetric  polynomial in $(y_1,\cdots,y_n)$ of degree \(q\)  it is divisible by the Vandermonde determinant 
\(
\displaystyle\prod_{1\leq i<j \leq n}(y_i-y_j)=\prod_{1\leq i<j \leq n}(p_i-p_j)=d_\textbf{p} \prod_{l=1}^{n-1} l!
\).\\
Thus
\[
\left(\frac{d}{dr}\right)^q\Psi(r)_{\mid_{r=1}}=0, \;\; \forall q<\frac{n(n-1)}{2}.
\] 
Furthermore,
\begin{eqnarray*}
\left(\frac{d}{dr}\right)^{\frac{n(n-1)}{2}}\Psi(r)_{\mid_{r=1}}=d_\textbf{p} \prod_{l=1}^{n-1} l! \left(\frac{n(n-1)}{2}\right)! \sum_{\textbf{K}_{\frac{n(n-1)}{2}}}\prod_{j=1}^n\frac{1}{k_j!}\frac{(\alpha+n-j)_{k_j}}{(\alpha+\beta+n-j)_{k_j}}.
\end{eqnarray*}
Noting that each $(k_1,k_2,\cdots,k_n)\in \textbf{K}_{\frac{n(n-1)}{2}}$ is noting but a rearrangement of the set $\{0,1,\cdots,n-1\}$, we see that 
\begin{eqnarray*}
\left(\frac{d}{dr}\right)^{\frac{n(n-1)}{2}}\Psi(r)_{\mid_{r=1}}=d_\textbf{p} \left(\frac{n(n-1)}{2}\right)! \sum_{\sigma \in \mathcal{S}_n} sg(\sigma)\prod_{j=1}^n\frac{(\alpha+n-j)_{\sigma(j-1)}}{(\alpha+\beta+n-j)_{\sigma(j-1)}}.
\end{eqnarray*}
Therefore 
\begin{eqnarray}\label{E8}
\begin{split}
\frac{1}{d_\textbf{p}}\det(_2F_1(\alpha+n-j,&\beta+p_i+n;\alpha+\beta+n-j;1-r^2))_{i,j}\sim \left(\frac{n(n-1)}{2}\right)!\times \\\
& (1-r^2)^{{\frac{n(n-1)}{2}}}\det\left(\frac{(\alpha+n-j)_{i-1}}{(\alpha+\beta+n-j)_{i-1}}\right)_{1\leq i,j\leq n},
\end{split}
\end{eqnarray}
as $r$ goes to $1^-$.\\
Now, consider the left hand side of (\ref{E8}). We subtract each $(j+1)$th-column from the $j$th-column and use the identity 
\[
\frac{(\alpha+n-j)_{i-1}}{(\alpha+\beta+n-j)_{i-1}}-\frac{(\alpha+n-j-1)_{i-1}}{(\alpha+\beta+n-j-1)_{i-1}}=(i-1)\beta \frac{(\alpha+n-j)_{i-2}}{(\alpha+\beta+n-j-1)_i}.
\]
After this transformation the determinant is of the form
\begin{eqnarray*}
\det\left(\frac{(\alpha+n-j)_{i-1}}{(\alpha+\beta+n-j)_{i-1}}\right)_{1\leq i,j\leq n}=(-1)^{(n+1)} (n-1)! \prod_{j=1}^{n-1}\frac{\beta}{(\alpha+\beta+n-j-1)_2}\times \\
\det\left(\frac{(\alpha+n-j)_{i-1}}{(\alpha+\beta+n-j+1)_{i-1}}\right)_{1\leq i,j\leq (n-1)}.
\end{eqnarray*}
Repeating  this process, we get at the mth stage ( $1\leq m \leq n-2$)  the following formula:
\begin{equation*}
\begin{split}
&\det\left(\frac{(\alpha+n-j)_{i-1}}{(\alpha+\beta+n-j)_{i-1}}\right)_{1\leq i,j\leq n}=\prod_{k=1}^{m}(-1)^{n-k+2}(n-k)! \times \\
&\prod_{k=1}^{m}\prod_{j=1}^{n-k}\frac{\beta+k-1}{(\alpha+\beta+n+k-j-2)_2}
\det\left(\frac{(\alpha+n-j)_{i-1}}{(\alpha+\beta+n-j+m)_{i-1}}\right)_{1\leq i,j\leq (n-m)}.
\end{split}
\end{equation*}
Now taking $m=n-2$ we obtain the desired result.
\end{proof}

\end{document}